\newcommand {\mbb}{\mathbb}
\newcommand {\ds}{\displaystyle}
\newcommand {\nin}{\noindent}
\newcommand {\seq}{\{\gamma_k\}_{k=0}^{\infty}}
\newcommand {\bs}{\{}
\newcommand {\es}{\}_{k=0}^{\infty}}
\newcommand {\fs}{\sum_{k=0}^{n}}
\newcommand {\lp}{\mathcal{L}-\mathcal{P}}
\newcommand {\be}{\begin{equation}}
\newcommand {\ee}{\end{equation}}
\newcommand {\ab}{{(\alpha,\beta)}}
\newtheorem {thm}{Theorem}
\newtheorem {cor}[thm]{Corollary}
\newtheorem {lm}[thm]{Lemma}
\newtheorem {prop}[thm]{Proposition}
\newtheorem {prob}[thm]{Problem}
\theoremstyle{definition}
\theoremstyle{definition}
\theoremstyle{definition}
\newtheorem {rem}[thm]{Remark}
\theoremstyle{definition}
\begin{document}

\begin{abstract}
Laguerre's theorem regarding the number of non-real zeros of a polynomial and its image under certain linear operators is generalized. This generalization is then used to (1) exhibit a number of previously undiscovered complex zero decreasing sequences for the Jacobi, ultraspherical, Legendre, Chebyshev, and generalized Laguerre polynomial bases and (2) simultaneously generate a basis $B$ and a corresponding $B$-CZDS. An extension to transcendental entire functions in the Laguerre-P\'olya class is given which, in turn, gives a new and short proof of a previously known result due to Piotrowski. The paper concludes with several open questions. 
\end{abstract}

\title{Non-Real Zero Decreasing Operators Related to Orthogonal Polynomials}
\author{Andre Bunton, Nicole Jacobs, Samantha Jenkins, Charles McKenry Jr., Andrzej Piotrowski, and Louis Scott}
\maketitle 

\let\thefootnote\relax\footnote{This research was partially supported by the MAA through an NREUP grant funded by the NSA (grant H98230-13-1-0270) and the NSF (grant DMS-1156582).}

\section{Introduction}\label{intro}
For a non-zero complex valued function $f$ of a complex variable, denote the number (counted according to multiplicity) of real and non-real zeros of $f$ by $Z_R(f)$ and $Z_C(f)$, respectively. For the identically zero function, we define $Z_C(0) = 0=Z_R(0)$. Let $L:\mbb{R}[x]\to\mbb{R}[x]$ be a linear operator.  If $L$ has the property that
\be
Z_C(L(p))\leq Z_C(p)
\ee
for every real polynomial $p$, then $L$ is called a {\it Complex Zero Decreasing Operator}, or CZDO. Such an operator $L$ is diagonal with respect to a basis $B = \{b_k\}_{k=0}^{\infty}$ for $\mbb{R}[x]$ if, and only if, there are real constants $\{\gamma_k\}_{k=0}^{\infty}$ for which 
\be
L(b_k(x)) = \gamma_k b_k(x) \qquad (k=0, 1, 2, \dots).
\ee
In this case, the sequence $\{\gamma_k\}_{k=0}^{\infty}$ is called a {\it Complex Zero Decreasing Sequence for the basis $B$}, or a $B$-CZDS.

A theorem of Laguerre demonstrates the existence of CZDS for the standard basis. We give two versions of his theorem here.
\begin{thm}\label{laguerre}\emph{(Laguerre's Theorem \cite[p. 6]{O}, \cite[p. 23]{CCsurvey})} Let $ p(x) = \fs a_k x^k$ be an arbitrary real polynomial of degree $n$. If $\alpha$ lies outside the interval $(-n,0)$, then 
$$
Z_C\left(\fs (k+\alpha)a_k x^k\right)\leq Z_C\left(\fs a_k x^k\right).
$$
In particular, if $\alpha\geq 0$, then the sequence $\bs k+\alpha \es$ is a CZDS for the standard basis.
\end{thm}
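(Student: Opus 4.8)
The plan is to first recast the operator in an analytic form. Since $k$ is precisely the eigenvalue of the Euler operator $x\frac{d}{dx}$ acting on $x^k$, I would rewrite the multiplier $k+\alpha$ to obtain
$$
g(x) := \sum_{k=0}^{n}(k+\alpha)a_k x^k = x\,p'(x) + \alpha\,p(x).
$$
Because the real and non-real zeros of a polynomial of degree $m$ together number $m$, and the leading coefficient of $g$ is $(n+\alpha)a_n$, the degree is preserved whenever $n+\alpha\neq 0$; in that case the target inequality $Z_C(g)\le Z_C(p)$ is \emph{equivalent} to $Z_R(g)\ge Z_R(p)$. The whole problem thus reduces to showing that the map $p\mapsto xp'+\alpha p$ never destroys real zeros.

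Second, I would collapse the two-sided hypothesis ``$\alpha$ outside $(-n,0)$'' to the single case $\alpha\ge 0$ using the reciprocal symmetry $p(x)\mapsto p^{*}(x):=x^{n}p(1/x)$. Since $z\mapsto 1/z$ preserves non-reality, $Z_C(p^{*})=Z_C(p)$, and a short computation shows that applying our operator to $p^{*}$ and reversing again turns the multiplier $k+\alpha$ into $(n-k)+\alpha=-(k+\beta)$ with $\beta=-n-\alpha$. Hence establishing the statement for $\alpha\ge 0$ automatically delivers it for $\beta\le -n$. (I would flag the case $a_0=0$, where $p^{*}$ has lower degree, as a technical point to be absorbed by first factoring out the power of $x$ vanishing at the origin.)

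Third comes the analytic heart, for $\alpha>0$. I would introduce $F(x)=|x|^{\alpha}p(x)$, which is $C^{1}$ on each half-line $(0,\infty)$ and $(-\infty,0)$, and check that on each half-line $F'(x)$ is a \emph{strictly positive} multiple of $g(x)$; thus the nonzero real zeros of $g$ coincide with the zeros of $F'$, which Rolle's theorem bounds below by the number of zeros of $F$. The decisive point—and the reason $\alpha>0$ is needed—is that $F$ vanishes at the origin (because $|x|^{\alpha}\to 0$), so the origin furnishes an \emph{extra node} on each half-line that exactly compensates for the single zero Rolle's theorem would otherwise ``lose.'' Writing $p=x^{j}\tilde p$ with $\tilde p(0)\ne 0$ lets me track the origin itself, where one verifies $g$ has a zero of the same order $j$ (using $j+\alpha>0$). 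Summing the positive, negative, and origin contributions yields $Z_R(g)\ge Z_R(p)$, completing $\alpha>0$.

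The main obstacle I anticipate is the bookkeeping around the origin together with the branch of $|x|^{\alpha}$: since $\alpha$ need not be an integer, $x^{\alpha}$ is undefined for $x<0$, forcing the split into two half-lines with separate sign computations for $F'$, and one must confirm that multiplicities at interior real zeros transfer correctly from $F$ to $F'$ with no double-counting against the Rolle zeros. The degenerate case $\alpha=0$ (where $g=xp'$ and the extra-node trick is vacuous) I would dispatch directly from $Z_R(p')\ge Z_R(p)-1$ combined with the factor $x$. Finally, the ``in particular'' claim is immediate: if $\alpha\ge 0$ then $\alpha$ lies outside $(-n,0)$ for every $n$, so applying the inequality with $n=\deg p$ to each real $p$ shows the diagonal operator $x^{k}\mapsto(k+\alpha)x^{k}$ is a CZDO, i.e. $\{k+\alpha\}_{k=0}^{\infty}$ is a CZDS for the standard basis.
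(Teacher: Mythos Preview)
Your proposal is correct, and for the range $\alpha\ge 0$ it is essentially the same argument the paper uses. The paper does not prove Theorem~\ref{laguerre} in place (it is stated with citations), but it later recovers the $\alpha\ge 0$ half as the specialization $q(x)=x$ of Theorem~\ref{bigcor}, whose proof via Theorem~\ref{bigthm} is precisely your ``analytic heart'': form the auxiliary function $\operatorname{sgn}(q)\,|q|^{\alpha}\,p$, observe that its derivative is a nonvanishing multiple of $qp'+\alpha q'p$ away from the zeros of $q$, track multiplicities at each zero of $pq$, and apply Rolle between consecutive zeros of $pq$. With $q(x)=x$ this is exactly your $F(x)=|x|^{\alpha}p(x)$ argument, including the key observation that the origin furnishes an extra Rolle node on each side.

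Two minor points. On $(-\infty,0)$ one computes $F'(x)=-(-x)^{\alpha-1}g(x)$, a strictly \emph{negative} multiple of $g$, not a positive one; this is harmless for the zero count, and the paper's signed choice $\operatorname{sgn}(x)\,|x|^{\alpha}$ avoids the asymmetry and lets Rolle run across the origin in one pass. More substantively, your reciprocal-polynomial reduction of the case $\alpha\le -n$ to $\alpha\ge 0$ is a genuine addition: the paper explicitly says only ``part of Laguerre's theorem'' is recovered from Theorem~\ref{bigcor} and never treats $\alpha\le -n$. Your reduction is standard and sound; as you note, when $a_0=0$ one first factors $p(x)=x^{j}\tilde p(x)$ with $\tilde p(0)\ne 0$, which replaces $(n,\alpha)$ by $(n-j,\alpha+j)$ and preserves the hypothesis $\alpha\notin(-n,0)$.
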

With notation as in Theorem \ref{laguerre},  
$$
x p'(x) + \alpha p(x)  = \fs (k+\alpha)a_k x^k
$$
and Laguerre's theorem may be restated accordingly. 
\begin{thm}\label{laguerreD}\emph{(Laguerre's Theorem; Differential Operator Version)} Let $ p(x)$ be an arbitrary real polynomial of degree $n$. If $\alpha$ lies outside the interval $(-n,0)$, then 
$$
Z_C\left(xp'(x)+  \alpha p(x)\right)\leq Z_C\left(p(x)\right).
$$
In particular, if $\alpha\geq 0$, then the differential operator $xD+\alpha I$ is a CZDO.
\end{thm}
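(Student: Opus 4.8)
The plan is to deduce Theorem \ref{laguerreD} directly from Theorem \ref{laguerre} by recognizing that the two statements concern the very same polynomial written in two different ways. Writing $p(x) = \sum_{k=0}^{n} a_k x^k$, I would first confirm the operator identity by a one-line coefficient computation: differentiating gives $xp'(x) = \sum_{k=0}^{n} k a_k x^k$, the $k=0$ term vanishing automatically, so that
$$xp'(x) + \alpha p(x) = \sum_{k=0}^{n} (k+\alpha) a_k x^k.$$
This is exactly the polynomial whose count of non-real zeros is bounded in Theorem \ref{laguerre}.

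Since the expression $xp'(x) + \alpha p(x)$ and the sum $\sum_{k=0}^{n}(k+\alpha)a_k x^k$ are literally equal as polynomials, and since the hypothesis that $\alpha$ lies outside $(-n,0)$ is identical in both formulations, the inequality $Z_C(xp'(x) + \alpha p(x)) \leq Z_C(p(x))$ follows at once from Theorem \ref{laguerre}. For the final assertion, I would note that when $\alpha \geq 0$ the interval $(-n,0)$ consists entirely of negative numbers, so $\alpha$ automatically lies outside it for every degree $n$; thus the inequality holds for all real polynomials $p$, which is precisely the definition of $xD + \alpha I$ being a CZDO.

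I do not expect a genuine obstacle here, since all of the substantive work, namely the actual bounding of non-real zeros, has already been carried out in Theorem \ref{laguerre}; this version is a purely notational restatement in the language of differential operators. The only points requiring care are bookkeeping ones: verifying that differentiation kills the constant term correctly so that the reindexing above is valid, and checking the degenerate case $n=0$, where $p$ is a nonzero constant, the interval $(-n,0)$ is empty, and the inequality $0 \leq 0$ holds trivially for every $\alpha$.
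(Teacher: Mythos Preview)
Your proposal is correct and matches the paper's approach exactly: the paper does not give a separate proof of Theorem \ref{laguerreD}, but simply observes the identity $xp'(x)+\alpha p(x)=\sum_{k=0}^{n}(k+\alpha)a_kx^k$ immediately before stating the theorem and presents it as a restatement of Theorem \ref{laguerre}. Your additional remarks on the $\alpha\geq 0$ case and the degenerate case $n=0$ are fine but go slightly beyond what the paper bothers to spell out.
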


\begin{rem}\label{D}
The differentiation operator $D$ defined by $D(p) := p'$ is a CZDO. This is included in Laguerre's theorem as the special case $\alpha=0$. Indeed, this choice gives
$$
Z_C(p'(x)) = Z_C(xp'(x)) \leq Z_C(p(x)).
$$
\end{rem}
\nin Alternatively, the fact that $D$ is a CZDO can be proved via Rolle's theorem from elementary calculus (see, for example, \cite[p. 2-3]{O}).

Laguerre's theorem is easily extended by iteration to sequences of the form $\bs h(k)\es$, where $h$ is a real polynomial having only real non-positive zeros. This, in turn, leads to a further extension via Hurwitz' theorem to sequences of the form $\bs\varphi(k)\es$, where $\varphi$ is an entire function which is the uniform limit on compact subsets of $\mbb{C}$ of polynomials having only real non-positive zeros (see, for example, \cite[Theorem 1.4]{CCczds}, \cite[p. 6]{O}, \cite{PLaguerre}). We have opted to state Laguerre's theorem in its simplest form to ease the comparison of this theorem with some of its generalizations demonstrated below. 

In 2007, Piotrowski gave a generalization of Laguerre's theorem to obtain a class of $H$-CZDS, where $H$ denotes the set of Hermite polynomials defined by 
$$
H_n(x) = (-1)^n e^{x^2} \frac{d^n}{dx^n} e^{-x^2} \qquad (n=0, 1, 2, \dots).
$$
\begin{thm}\label{hmslaguerregen}\emph{(\cite[p. 57, Proposition 68]{P})}
Suppose $p(x)$ is an arbitrary real polynomial of degree $n$. If $\alpha,\beta,c,d$ are real numbers such that $\alpha \geq 0$, $\beta\geq 0$, and $\alpha+c n \geq 0$,
then 
$$\
\ds Z_C\big(-\beta p''(x)+ (cx+d)p'(x)+ \alpha p(x)\big) \leq Z_C\big(p(x)\big).
$$
In particular, if $\alpha$, $\beta$, and $c$ are all non-negative, then $-\beta D^2+ (cx+d)D + \alpha I$ is a CZDO.
\end{thm}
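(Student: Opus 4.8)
The plan is to reduce Piotrowski's generalization to the already-available Laguerre operator from Theorem~\ref{laguerreD} by recognizing the operator $-\beta D^2 + (cx+d)D + \alpha I$ as a composition (or limit of compositions) of operators already known to be CZDOs. First I would dispose of the first-order part: by Theorem~\ref{laguerreD}, the operator $cxD + \alpha I$ decreases non-real zeros whenever $\alpha \geq 0$ and $\alpha + cn \geq 0$ (this is exactly Laguerre's hypothesis that the scaling parameter lies outside $(-n,0)$, after absorbing $c$), and the translation-type term $dD$ should be combinable with it. The genuinely new ingredient is the second-order term $-\beta D^2$, and the key observation I would exploit is that the Hermite polynomials are eigenfunctions of precisely such a second-order operator: $H_n$ satisfies $H_n'' - 2x H_n' + 2n H_n = 0$, so the Hermite differential operator is intimately tied to $-D^2 + (\text{linear})D$.

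The main technical device I expect to need is an integral or convolution representation that realizes $-\beta D^2 + (\text{lower order})$ as an averaging of real polynomials against a measure supported on the real axis, so that the composite map preserves or decreases the non-real zero count. Concretely, I would look for a kernel so that
\[
\big(-\beta D^2 + (cx+d)D + \alpha I\big)p(x)
= \int_{-\infty}^{\infty} p(x + t\sqrt{\beta})\, \phi(x,t)\,d\mu(t)
\]
for an appropriate Gaussian-type measure $d\mu$, since integrating a polynomial against such a measure (the heat-kernel / Weierstrass transform) is the classical way $-D^2$ enters the Hermite theory and is known to preserve reality of zeros in the Laguerre--P\'olya setting. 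Failing a clean closed form, I would instead factor the operator: writing $-\beta D^2 + (cx+d)D + \alpha I$ as a product of two commuting or nearly-commuting first-order operators of the form $(a_1 x + b_1)D + \gamma_1 I$ and $(a_2 x + b_2)D + \gamma_2 I$ would let me apply Theorem~\ref{laguerreD} twice, once for each factor, provided each factor's parameters satisfy the requisite sign and interval conditions for polynomials of the appropriate degree.

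I would then verify that the constraints $\alpha \geq 0$, $\beta \geq 0$, $\alpha + cn \geq 0$ are exactly what is required to keep every intermediate operator within the scope of Laguerre's theorem, paying particular attention to degree bookkeeping: differentiation lowers degree, so the parameter $\alpha + cn$ appearing at degree $n$ degrades to $\alpha + c(n-1)$, etc., and I must confirm the sign conditions survive down the chain. The final sentence of the statement, that $-\beta D^2 + (cx+d)D + \alpha I$ is a CZDO when $\alpha, \beta, c \geq 0$, is then immediate: under these hypotheses $\alpha + cn \geq 0$ holds automatically for every $n \geq 0$, so the inequality $Z_C\big(-\beta p'' + (cx+d)p' + \alpha p\big) \leq Z_C(p)$ holds for polynomials of every degree.

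The hard part will be the $-\beta D^2$ contribution. Laguerre's theorem as stated handles only first-order operators $xD + \alpha I$, and a second derivative does not obviously decompose into such factors while respecting the non-real zero count; the danger is that a naive factorization introduces complex parameters or violates the outside-the-interval condition. I anticipate the crux is establishing an analogue of Laguerre's reality-preservation for the specific second-order ``Hermite-type'' operator, most cleanly via the Weierstrass/heat-semigroup representation or via a Hermite--Poulain-style theorem (the statement that applying a polynomial in $D$ whose symbol has only real zeros does not increase the number of non-real zeros). If a Hermite--Poulain type result is available, the whole argument collapses to checking that the associated symbol of $-\beta D^2 + (cx+d)D + \alpha I$ factors over the reals under the given sign constraints, which would be the decisive and most delicate step.
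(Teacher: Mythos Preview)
The paper does not actually prove this theorem; it is quoted in the Introduction as a known result from Piotrowski's thesis. What the paper \emph{does} contribute is a new, short proof of the key lemma Piotrowski used on the way to it (Corollary~\ref{lmgenBC}): $Z_C\big((cx+d)p-\beta p'\big)\le Z_C(p)$ for $c,\beta\ge 0$. That proof works by multiplying through by the $\mathcal{L}\text{-}\mathcal{P}$ function $\varphi(x)=-\exp(-cx^2/2-dx)$ and invoking Theorem~\ref{LPcor} with $q=1$, $\alpha=\beta^{-1}$, so that $(cx+d)p-\beta p'$ becomes, up to the nonvanishing factor $\varphi$, exactly $\varphi p'+\beta^{-1}\varphi' p$. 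That is the mechanism you should be aiming for; none of the three mechanisms you sketch reaches it.

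Concretely, your factorization idea cannot work: composing $(a_1x+b_1)D+\gamma_1 I$ with $(a_2x+b_2)D+\gamma_2 I$ produces $D^2$-coefficient $(a_1x+b_1)(a_2x+b_2)$, a quadratic in $x$, not the constant $-\beta$; forcing it to be constant means $a_1=a_2=0$, which then kills the needed $cxD$ term. Your Hermite--Poulain route also fails as stated: that theorem concerns constant-coefficient polynomials in $D$, whereas here the coefficient of $D$ is $cx+d$, so there is no ``symbol'' to factor over $\mathbb{R}$. The heat-kernel/Weierstrass sketch is too vague---the undetermined $\phi(x,t)$ is doing all the work---and in any case the Gauss--Weierstrass transform realizes $e^{\beta D^2}$, not the mixed operator $-\beta D^2+(cx+d)D+\alpha I$. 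The actual route is the $\mathcal{L}\text{-}\mathcal{P}$ multiplication trick above for the first-order piece $(cx+d)I-\beta D$, followed (in Piotrowski's thesis) by a separate Rolle-type zero-counting argument to absorb the additive $\alpha p$ term under the hypotheses $\alpha\ge 0$, $\alpha+cn\ge 0$.
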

Since the Hermite polynomials satisfy the differential equation (see, for example, \cite[p. 188]{R})
$$
n H_n(x) = - \frac{1}{2}H_n''(x)+ xH_n'(x)  \qquad (n=0, 1, 2, \dots)
$$
the previous theorem gives, as a special case, the existence $H$-CZDS which can be interpolated by linear polynomials.
\begin{thm}\emph{(\cite[p. 87, Theorem 101]{P})}
Let $p(x) = \fs a_k H_k(x)$ be an arbitrary real polynomial of degree $n$. If $\alpha$ lies outside the interval $(-n,0)$, then 
$$
Z_C\left(\fs (k+\alpha)a_k H_k(x)\right)\leq Z_C\left(\fs a_k H_k(x)\right).
$$
In particular, if $\alpha\geq 0$, then the sequence $\bs k+\alpha \es$ is an $H$-CZDS.
\end{thm}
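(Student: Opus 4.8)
The plan is to realize the prescribed multiplier sequence as the spectrum of a differential operator of the type treated in Theorem~\ref{hmslaguerregen}, and then simply to quote that theorem. First I would invoke the Hermite differential equation in the form $\left(-\tfrac12 D^2 + xD\right)H_k = k H_k$ recorded above, so that the operator
$$
L := -\tfrac12 D^2 + xD + \alpha I
$$
acts diagonally on the Hermite basis with $L(H_k) = (k+\alpha)H_k$. By linearity, for $p = \sum_{k=0}^{n} a_k H_k$ one has $L(p) = \sum_{k=0}^{n} (k+\alpha)a_k H_k$, which is exactly the polynomial whose non-real zeros must be bounded; thus the statement is equivalent to $Z_C(L(p)) \le Z_C(p)$.

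For $\alpha \ge 0$ this is immediate. The operator $L$ is precisely the operator $-\beta D^2 + (cx+d)D + \alpha I$ of Theorem~\ref{hmslaguerregen} under the choice $\beta = \tfrac12$, $c = 1$, $d = 0$, and the three hypotheses there, namely $\alpha \ge 0$, $\beta \ge 0$, and $\alpha + cn = \alpha + n \ge 0$, all hold. Hence Theorem~\ref{hmslaguerregen} yields $Z_C(L(p)) \le Z_C(p)$. Since the degree $n$ is arbitrary, this is exactly the ``in particular'' assertion that $\{k+\alpha\}_{k=0}^{\infty}$ is an $H$-CZDS, and it already establishes the claimed inequality on the entire half-line $\alpha \ge 0$.

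The main obstacle is the remaining range $\alpha \le -n$, which the hypotheses of Theorem~\ref{hmslaguerregen} do not reach: requiring $L(H_k) = (k+\alpha)H_k$ essentially pins down $\beta = \tfrac12$, $c = 1$, $d = 0$ (diagonality on the Hermite basis forces $d = 0$ and $c = 2\beta$, and then $\beta = \tfrac12$ to make the eigenvalue exactly $k$), and with these choices the condition $\alpha \ge 0$ cannot be relaxed. Structurally, though, $\alpha \le -n$ is as benign as $\alpha \ge 0$: for $k = 0,\dots,n$ the multipliers $k+\alpha$ are then all $\le 0$, so the sequence has no sign change, just as in the standard-basis Theorem~\ref{laguerre}. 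I would therefore try to reduce this case to the one already settled via the substitution $\alpha \mapsto \hat\alpha := -\alpha - n \ge 0$, which reflects the index through the identity $k+\alpha = -\big((n-k)+\hat\alpha\big)$; since an overall sign does not affect $Z_C$, the task reduces to transporting the inequality across the index reversal $k \leftrightarrow n-k$ on $\mathrm{span}\{H_0,\dots,H_n\}$.

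The hard part will be that, unlike the standard basis, where the reversal $p(x)\mapsto x^n p(1/x)$ manifestly preserves $Z_C$, there is no equally transparent $Z_C$-preserving index reversal on the Hermite span, so the reflection above must be justified rather than assumed. I would either construct such a reversal directly or, failing a clean symmetry, prove the $\alpha \le -n$ case by the same integrating-factor and Rolle-type analysis of the zeros of $L(p)$ that underlies the full-range statement of Theorem~\ref{laguerre}, now carried out for the second-order operator $L$. Once both tails $\alpha \ge 0$ and $\alpha \le -n$ are in hand, they combine to give $Z_C(L(p)) \le Z_C(p)$ for every $\alpha$ outside $(-n,0)$, completing the proof.
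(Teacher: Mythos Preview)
Your approach for $\alpha \geq 0$ is exactly the paper's: the Hermite differential equation makes $L = -\tfrac12 D^2 + xD + \alpha I$ diagonal on $\{H_k\}$ with eigenvalue $k+\alpha$, and Theorem~\ref{hmslaguerregen} with $\beta = \tfrac12$, $c=1$, $d=0$ gives $Z_C(L(p)) \le Z_C(p)$. The paper stops there---its only in-text justification is the sentence ``the previous theorem gives, as a special case, the existence [of] $H$-CZDS which can be interpolated by linear polynomials,'' i.e., precisely the ``in particular'' clause. The full statement, including the range $\alpha \le -n$, is simply cited from \cite{P} and not re-proved here.

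Your diagnosis of the $\alpha \le -n$ case is therefore accurate rather than a gap in your argument relative to the paper. As you observe, diagonality on the Hermite basis forces $d=0$, $c=2\beta$, and then $\beta=\tfrac12$ to get eigenvalue exactly $k$, so the operator parameter $\alpha$ in Theorem~\ref{hmslaguerregen} must equal the given $\alpha$, and the hypothesis $\alpha \ge 0$ genuinely excludes the negative tail. The reflection $\hat\alpha = -\alpha - n$, $k \leftrightarrow n-k$ you propose is natural, but you are right that there is no transparent $Z_C$-preserving Hermite analogue of the reciprocal map $p(x)\mapsto x^n p(1/x)$. In \cite{P} the full range is obtained by the direct integrating-factor/Rolle analysis you allude to, so pursuing that line would close the gap; within the scope of this paper, your proposal already matches everything the paper itself proves.
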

While no complete characterization of CZDS is currently known for any basis, the characterization of CZDS which can be interpolated by polynomials has been achieved for both the standard basis and the Hermite basis.
\begin{thm}\label{polyintCZDS}\emph{(Craven-Csordas \cite[p. 13]{CCczds})} Let $h(x)$ be a real polynomial.  Then $\bs h(k) \es$ is a CZDS for the standard basis if and only if either
\begin{enumerate}
\item{$h(0)\neq 0$ and $h(x)$ has only real negative zeros}, or
\item{$h(0)=0$ and $h(x)$ is of the form
\be
h(x) = x(x-1)(x-2)\cdots(x-m+1)\prod_{k=1}^{p} (x-b_k)
\ee}
where $m\geq 1$ and $p\geq 0$ are integers and $b_k < m$ for $k=1,2,3,\dots,p$.  
\end{enumerate}
\end{thm}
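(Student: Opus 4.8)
My plan is to prove the two implications separately, unifying cases (1) and (2) by allowing $m=0$ in case (1) (so that the empty product $x(x-1)\cdots(x-m+1)$ equals $1$ and the condition $b_k<m$ reads $b_k<0$, i.e. $h$ has only real negative zeros).

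For sufficiency I would realize the diagonal operator $L$ with $L(x^k)=h(k)x^k$ as an explicit composition of operators each of which is a CZDO on the polynomials to which it is applied. Normalizing the leading coefficient of $h$ to $1$ and writing $h(x)=x(x-1)\cdots(x-m+1)\prod_{j=1}^{p}(x-b_j)$, I set $L=\big[\prod_{j=1}^{p}(xD-b_jI)\big]\circ(x^mD^m)$, which indeed satisfies $L(x^k)=h(k)x^k$. The operator $x^mD^m$ is a CZDO, because $D^m$ is a CZDO by Remark \ref{D} and multiplication by $x^m$ leaves $Z_C$ unchanged; moreover $x^mD^m(p)$ is always divisible by $x^m$. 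The engine of the argument is the identity $(xD-bI)(x^mq)=x^m(xD+(m-b)I)(q)$, valid for every polynomial $q$, which converts the ``bad'' shift $-b\in(-m,0]$ into the shift $m-b>0$. Since each $b_j<m$, repeatedly applying this identity shows inductively that every intermediate polynomial remains divisible by $x^m$ and that each factor $xD-b_jI$ acts, after stripping $x^m$, as the Laguerre operator $xD+(m-b_j)I$ with $m-b_j>0$, a CZDO by Theorem \ref{laguerreD}. Chaining the resulting inequalities yields $Z_C(L(p))\le Z_C(p)$.

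For necessity I would first observe that a CZDS is in particular a \emph{multiplier sequence} (a sequence preserving real-rootedness): applying the defining inequality to a real-rooted $p$ forces $Z_C(L(p))=0$. By the P\'olya--Schur theorem the generating function $\Phi(x)=\sum_{k\ge0}h(k)x^k/k!$ then lies, after normalizing the leading coefficient of $h$ to be positive, in the Laguerre--P\'olya class $\lp$ with nonnegative Taylor coefficients. Writing $\Phi(x)=\hat h(x)e^x$, where $\hat h$ is the image of $h$ under the Bell--Touchard transform (a degree-preserving linear bijection), this membership forces two things: the sign pattern of $\{h(k)\}$ consists of an initial block $h(0)=\cdots=h(m-1)=0$ followed by $h(k)>0$ for $k\ge m$, and $\hat h$ has only real nonpositive zeros. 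The sign pattern gives $m\ge0$, divisibility of $h$ by $x(x-1)\cdots(x-m+1)$, and the absence of any integer zero of $h$ that is $\ge m$; the case $h(0)\ne0$ is the instance $m=0$.

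The remaining and hardest step is to convert the information about $\hat h$ into the asserted factorization of $h$ itself: that all zeros of $h$ are real and that every zero other than the forced prefix $0,1,\dots,m-1$ is strictly less than $m$. The difficulty is that the transform $h\mapsto\hat h$ is not multiplicative, so one cannot read off the zeros of $h$ from those of $\hat h$, nor peel off a single linear factor and induct, since dividing a polynomial multiplier sequence by a Laguerre factor need not return a multiplier sequence. In particular, the integer sign pattern alone does not exclude complex-conjugate zeros or even-multiplicity real zeros lying in an interval $(\ell,\ell+1)$ with $\ell\ge m$; these must be eliminated using the full strength of the $\lp$ condition on $\Phi$. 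I expect this transfer --- equivalently, the precise characterization of which polynomials $h$ make $\{h(k)\}$ a multiplier sequence --- to be the main obstacle, and I would attack it by combining the location of the zeros of $\hat h$ with the key identity above run in reverse, using that each admissible factor $xD-b_jI$ with $b_j<m$ corresponds, after removal of $x^m$, to a genuine Laguerre factor and hence to a real nonpositive zero of the transformed polynomial.
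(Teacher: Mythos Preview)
This theorem is quoted in the paper from Craven--Csordas \cite{CCczds} without proof, so there is no in-paper argument to compare against; I assess your proposal on its own.

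Your sufficiency argument is correct and is essentially the standard one. The identity $(xD-bI)(x^{m}q)=x^{m}\bigl(xD+(m-b)I\bigr)(q)$ is exactly the right device: since $x^{m}D^{m}p$ is divisible by $x^{m}$ and each application of $xD-b_{j}I$ preserves that divisibility, the factor acts (after stripping $x^{m}$) as the Laguerre operator $xD+(m-b_{j})I$ with $m-b_{j}>0$, which is a CZDO by Theorem~\ref{laguerreD}. Chaining the resulting inequalities gives $Z_{C}(L(p))\le Z_{C}(p)$.

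The necessity direction has a genuine gap, and you yourself flag it. Passing to P\'olya--Schur is the right opening move and does yield $\hat h(x)e^{x}\in\lp$ with nonnegative Taylor coefficients, hence that $\hat h$ has only real nonpositive zeros and that $h(k)\ge 0$ for all integers $k\ge 0$. But the remaining inference --- that $h$ itself has only real zeros, all of which except the forced prefix $0,1,\dots,m-1$ lie strictly below $m$ --- is precisely the content of the cited Craven--Csordas result, and your sketch does not supply it. The transform $h\mapsto\hat h$ (change of basis from falling factorials to monomials) is linear but not multiplicative, so the zero set of $\hat h$ does not locate the zeros of $h$; ``running the key identity in reverse'' does not help, because that identity manipulates operators acting on $x^{m}q$, not the factorization of $h$. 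Concretely, nonnegativity of $h$ on the nonnegative integers does not by itself exclude a complex-conjugate pair of zeros of $h$ with real part exceeding $m$, nor a real double zero in some interval $(\ell,\ell+1)$ with $\ell\ge m$; ruling these out is exactly the hard step you have deferred.
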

The previous theorem remains valid {\it mutatis mutandis} if `CZDS for the standard basis' is replaced by `$H$-CZDS' (See \cite[p. 95, Theorem 111]{P}).

The main results of this paper include a generalization of Laguerre's theorem (Theorem \ref{bigcor}), the demonstration of classes of CZDS for the Jacobi, ultraspherical, Legendre, Chebyshev, and generalized Laguerre polynomial bases (Proposition \ref{JaCZDS}, Theorem \ref{JCZDS}, Corollaries \ref{PCZDS} and \ref{TCZDS}, and Theorem \ref{LCZDS}), a method for simultaneously generating a basis $B$ and a corresponding $B$-CZDS (Section \ref{generate}), and the extension of these results to transcendental entire functions in the Laguerre-P\'olya class (Section \ref{LP}).

\section{A Class of Complex Zero Decreasing Operators}\label{CZDO}

This section contains two theorems which generalize Laguerre's theorem. 

\begin{thm}\label{bigthm}
Let $p$ and $q$ be real polynomials, each with degree at least one, and let $\alpha\geq 0$.  Then 
$$
Z_R(f(x)) \geq Z_R(p(x))+ Z_R(q(x))-1
$$
where
$$
f(x) =  q(x) p'(x)+\alpha q'(x) p(x).
$$
\end{thm}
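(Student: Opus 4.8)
The plan is to produce at least $Z_R(p)+Z_R(q)-1$ real zeros of $f$, sorting them into two disjoint families: zeros of $f$ forced to sit \emph{at} the real zeros of $pq$ by local multiplicity considerations, and zeros forced to lie strictly \emph{between} consecutive real zeros of $pq$ by a Rolle-type argument. Note first that $f$ is a genuine nonzero polynomial: its leading coefficient is $(\deg p+\alpha\deg q)$ times the product of the leading coefficients of $p$ and $q$, which is nonzero since $\deg p\geq 1$. I would begin by disposing of the case $\alpha=0$, where $f=qp'$ and hence $Z_R(f)=Z_R(q)+Z_R(p')\geq Z_R(q)+Z_R(p)-1$, using the elementary consequence of Rolle's theorem (equivalently, the fact that $D$ is a CZDO, Remark \ref{D}) that $Z_R(p')\geq Z_R(p)-1$. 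So assume from now on that $\alpha>0$.

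The key device is the auxiliary function $G(x)=p(x)\,|q(x)|^{\alpha}$. Since $\alpha>0$, $G$ is continuous on all of $\mbb{R}$ and vanishes at every real zero of $pq$ (at a zero of $q$ the factor $|q|^{\alpha}$ tends to $0$; at a zero of $p$ the factor $p$ vanishes). On any open interval where $q$ does not vanish, $|q|^{\alpha}=(q^{2})^{\alpha/2}$ is smooth, so $G$ is differentiable there with $G'(x)=\pm\,|q(x)|^{\alpha-1}f(x)$, the sign being that of $q$; as $|q|^{\alpha-1}\neq 0$ on such an interval, the zeros of $G'$ there are precisely the zeros of $f$. Now let $x_{1}<\cdots<x_{D}$ enumerate the distinct real zeros of $pq$. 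On each of the $D-1$ intervals $[x_{i},x_{i+1}]$ the interior contains no zero of $pq$, so $G$ is continuous on the closed interval, differentiable on the open one, and vanishes at both endpoints; Rolle's theorem yields $\xi_{i}\in(x_{i},x_{i+1})$ with $G'(\xi_{i})=0$, whence $f(\xi_{i})=0$. These $D-1$ zeros of $f$ lie where $pq\neq 0$, so they are distinct from one another and from the points $x_{i}$.

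It remains to count the zeros of $f$ situated at the $x_{i}$. Here I would carry out the routine local computation: writing $p$ and $q$ near $x_{i}$ according to their orders of vanishing, one checks that $f$ vanishes at $x_{i}$ to order exactly one less than does $pq$ — that is, $a-1$ when $x_{i}$ is a zero of $p$ alone of order $a$, $s-1$ when it is a zero of $q$ alone of order $s$, and $a+s-1$ at a common zero of orders $a$ and $s$, the controlling leading coefficient being the positive number $a+\alpha s$. Summing over all $x_{i}$, the total number of zeros of $f$ at these points is $Z_R(pq)-D=Z_R(p)+Z_R(q)-D$. Combining the two families,
$$
Z_R(f)\ \geq\ \big(Z_R(p)+Z_R(q)-D\big)+(D-1)\ =\ Z_R(p)+Z_R(q)-1.
$$

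The conceptual heart of the argument, and the step I expect to demand the most care, is the choice of $G=p|q|^{\alpha}$ together with the verification that it is continuous and vanishes at \emph{every} real zero of $pq$: this is exactly what converts Rolle's theorem into a supply of $D-1$ interior zeros, and it is also what forces the hypothesis $\alpha>0$ and hence the separate (easy) treatment of $\alpha=0$. The remaining local multiplicity bookkeeping is elementary, but it must be done carefully so that the interior zeros and the zeros at the $x_{i}$ are tallied with no overlap, which is guaranteed precisely because the former avoid the zero set of $pq$.
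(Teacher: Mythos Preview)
Your proof is correct and follows essentially the same route as the paper's: dispose of $\alpha=0$ via Rolle, then for $\alpha>0$ use a local multiplicity count at the zeros of $pq$ together with Rolle's theorem applied to an auxiliary function of the form $p\cdot|q|^{\alpha}$ between consecutive zeros of $pq$. The only cosmetic difference is that the paper takes the signed version $g(x)=\mathrm{sgn}(q(x))\,|q(x)|^{\alpha}$ so that $|q|^{1-\alpha}(gp)'=f$ exactly, whereas you use $G=p\,|q|^{\alpha}$ and absorb the sign into $G'=\pm|q|^{\alpha-1}f$; either way the zeros of the derivative coincide with those of $f$ on intervals where $q\neq 0$.
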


\begin{proof}
When $\alpha=0$, we have 
$$
Z_R( q(x) p'(x)) = Z_R(q(x))+ Z_R(p'(x)) \geq Z_R(p(x))+ Z_R(q(x))-1,
$$
where the last inequality is a consequence of Rolle's theorem.  

We will now suppose $\alpha>0$ for the remainder of the proof.  Suppose $x_0$ is a zero of $p(x) \cdot q(x)$ and write
$$
p(x) = (x-x_0)^m h_1(x) \qquad (h_1(x_0)\neq 0),
$$
$$
q(x) = (x-x_0)^w h_2(x)  \qquad (h_2(x_0)\neq 0).
$$
Then 
$$
f(x) = (x-x_0)^{m+w-1} h_3(x)
$$
where 
$$
h_3(x_0) = (m+\alpha w) h_1(x_0) h_2(x_0) \neq 0.
$$    
(Note:  here we have used the assumption $\alpha>0$.)
That is to say, if $x_0$ is a zero of $p\cdot q$ of multiplicity $m+w$, then $x_0$ is a zero of $f$ of multiplicity $m+w-1$.  We will now complete the proof by demonstrating that $f$ must vanish between consecutive zeros of $p\cdot q$. Define
$$
g(x) := 	\begin{cases}  
				\big[q(x)\big]^\alpha & \qquad \text{if } q(x)\geq 0\\
				
				-\big[-q(x)\big]^\alpha & \qquad \text{if } q(x) <0, \\
				\end{cases}
$$
so that 
$$
\big|q(x) \big|^{1-\alpha} \frac{d}{dx} \left[ g(x) p(x) \right] = q(x) p'(x)+\alpha q'(x) p(x) \qquad (x\notin\{ z|q(z)= 0\})
$$
Let $x_1<x_2$ be consecutive zeros of $p\cdot q$.  Then they are also consecutive zeros of $g\cdot p$, which is continuous on $[x_1, x_2]$ and differentiable on $(x_1, x_2)$.  By Rolle's theorem, $(g\cdot p)'$, and therefore $q(x) p'(x)+\alpha q'(x) p(x)$, has a zero in the interval $(x_1, x_2)$ and the conclusion of the theorem holds.
\end{proof}
We note that Theorem \ref{bigthm} is best possible in the sense that the conclusion does not necessarily hold for any $\alpha<0$. For example, if $\alpha<0$, $p(x) = x^n(x^2+\alpha)$, and $q(x)=x$, then $f(x) = x^n((\alpha+n+2)x^2+ \alpha(\alpha+n))$. Choosing $n=\max\{m\in\mbb{Z} | m\geq 0 \text{ and }  \alpha+m<0\}$ yields $Z_R(f)=n<n+2 = Z_R(p)+ Z_R(q)-1$.

\begin{thm}\label{bigcor}
Let $p$ and $q$ be real polynomials and $\alpha\geq 0$.  Then 
$$
Z_C(q(x) p'(x)+\alpha q'(x) p(x)) \leq Z_C(p(x))+ Z_C(q(x)).
$$
In particular, if $q$ has only real zeros, then $q(x) D+\alpha q'(x) I $ is a CZDO.
\end{thm}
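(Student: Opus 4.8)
The plan is to deduce Theorem~\ref{bigcor} from Theorem~\ref{bigthm} by relating the real-zero count of a real polynomial to its non-real-zero count via its total degree. The key observation is that if $r$ is a real polynomial of degree $N$, then $Z_R(r) + Z_C(r) = N$, since every zero is counted with multiplicity and non-real zeros of a real polynomial occur in conjugate pairs (so $Z_C(r)$ is even, but more importantly the total number of zeros equals the degree by the fundamental theorem of algebra). Thus controlling $Z_R$ from below, as Theorem~\ref{bigthm} does, will control $Z_C$ from above once we pin down the degree of $f(x) = q(x)p'(x) + \alpha q'(x)p(x)$.

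First I would dispose of the degenerate cases where Theorem~\ref{bigthm} does not directly apply, namely when $p$ or $q$ has degree zero (or is the zero polynomial). If $q$ is constant, then $q' = 0$ and $f = q \cdot p'$, so $Z_C(f) = Z_C(p') \le Z_C(p)$ by Remark~\ref{D}, and the claimed bound holds since $Z_C(q) = 0$. Symmetrically, if $p$ is constant then $p' = 0$ and $f = \alpha q' p$, giving $Z_C(f) = Z_C(q') \le Z_C(q)$ when $\alpha > 0$ (with the $\alpha = 0$ case trivial). With these cases handled, I may assume both $p$ and $q$ have degree at least one and invoke Theorem~\ref{bigthm}.

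The central computation is to determine $\deg f$. Writing $\deg p = n$ and $\deg q = d$, both summands $q p'$ and $\alpha q' p$ have degree $n + d - 1$; the subtlety is possible cancellation of leading terms. Comparing leading coefficients, the top-degree coefficient of $f$ is proportional to $(n + \alpha d)$ times the product of the leading coefficients of $p$ and $q$, which is strictly positive when $\alpha \ge 0$ and $n, d \ge 1$. Hence there is no cancellation and $\deg f = n + d - 1$ exactly. Then
$$
Z_C(f) = (n + d - 1) - Z_R(f) \le (n + d - 1) - \big(Z_R(p) + Z_R(q) - 1\big) = (n - Z_R(p)) + (d - Z_R(q)) = Z_C(p) + Z_C(q),
$$
where the inequality is Theorem~\ref{bigthm}. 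The final assertion about the operator $qD + \alpha q' I$ being a CZDO then follows: when $q$ has only real zeros, $Z_C(q) = 0$, so the displayed bound collapses to $Z_C(f) \le Z_C(p)$.

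I expect the main obstacle to be the degree bookkeeping rather than any deep idea: one must verify that the leading terms of $qp'$ and $\alpha q'p$ genuinely combine with a nonzero coefficient, which is exactly where the hypothesis $\alpha \ge 0$ (ruling out the sign cancellation that made Theorem~\ref{bigthm} fail for negative $\alpha$) does its work. Care is also needed to confirm that the identity $Z_R(r) + Z_C(r) = \deg r$ is applied to $f$ with the correct degree in the degenerate cases as well, so that the edge cases and the generic case assemble into a single clean inequality.
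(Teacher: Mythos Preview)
Your proposal is correct and follows essentially the same route as the paper: handle the degenerate (constant) cases via Remark~\ref{D}, compute that $\deg f = n+d-1$ because the leading coefficient $(n+\alpha d)\,a_n b_m$ is nonzero, and then convert the real-zero lower bound of Theorem~\ref{bigthm} into the desired non-real-zero upper bound via $Z_R(f)+Z_C(f)=\deg f$. One tiny wording slip: the leading coefficient is \emph{nonzero} (because $n+\alpha d>0$), not necessarily positive, but your conclusion that there is no cancellation is unaffected.
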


\begin{proof}
We first note that the result is trivial when the function $ q(x)p'(x)+\alpha q'(x)p(x)$ is identically zero. Furthermore, if either $p$ or $q$ is a non-zero constant function, then the result follows from Rolle's theorem as was noted in Remark 3 above. We may, therefore, assume that $p$ and $q$ each have degree at least one. Suppose 
$$
p(x) = \sum_{k=0}^{n}a_k x^k \qquad \text{and} \qquad q(x) = \sum_{k=0}^{m} b_k x^k.
$$
Then the leading term of 
$$
f(x) = q(x) p'(x)+ \alpha q'(x) p(x) 
$$
is $(n+\alpha m) a_n b_m x^{n+m-1}$, so $f$ has degree $n+m-1$.  Applying Theorem \ref{bigthm}, we have
\begin{eqnarray*}
Z_C(f) 	&=& n+m-1 - Z_R(f) \\ 
				&\leq&  n+m-1 - (Z_R(p) + Z_R(q) - 1) \\ 
				&=& n+m-1 - (n-Z_C(p) + m-Z_C(q) - 1) \\ 
				&=& Z_C(p)+Z_C(q). 
\end{eqnarray*}
Therefore, $Z_C(q(x) p'(x)+\alpha q'(x) p(x) ) \leq Z_C(p(x))+ Z_C(q(x))$.
\end{proof}
Note that part of Laguerre's theorem (Theorem \ref{laguerreD}) is obtained when we set $q(x) = x$ in Theorem \ref{bigcor}. 

\begin{rem}{\label{pqr}}
The two theorems in this section can be extended to any number of constants and functions. For example, using the same techniques as above, one can show that 
$$
Z_C(p q r'+\alpha p' q r + \beta p q' r   ) \leq Z_C(p)+Z_C(q)+Z_C(r), 
$$
where $\alpha$ and $\beta$ are non-negative real numbers and $p$, $q$, and $r$ are polynomials. 
\end{rem}

\section{CZDS for the Jacobi Polynomial Basis}\label{JCZDSsection}
\subsection{The Jacobi Polynomials}
We now apply the results of the previous section to demonstrate the existence of CZDS for the Jacobi polynomial basis. Following Rainville \cite[p. 257]{R}, we define the Jacobi polynomials with parameters $\alpha>-1$ and $\beta>-1$ by  
$$
P_n^{(\alpha, \beta)}(x) = \frac{(-1)^n (1-x)^{-\alpha}(1+x)^{-\beta}}{2^n n!} \frac{d^n}{dx^n} \left[(1-x)^{n+\alpha} (1+x)^{n+\beta}\right].
$$
For each non-negative integer $n$, the Jacobi polynomials satisfy the differential equation \cite[p. 258]{R}
\be \label{Jdiffeq} 
\left((x^2-1) D^2 + [(2+\alpha+\beta)x + \alpha-\beta ]D \right) P_{n}^{(\alpha, \beta)}(x)=n(n+1+\alpha+\beta)P_{n}^{(\alpha, \beta)}(x).
\ee

\begin{prop}\label{JaCZDS}
The sequence $\bs k(k+1+\alpha+\beta) \es$ is a $P^{\ab}$-CZDS. 
\end{prop}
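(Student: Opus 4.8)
The plan is to recognize the statement as the assertion that the \emph{Jacobi differential operator} is a CZDO. Writing
$$T := (x^2-1)D^2 + \big[(2+\alpha+\beta)x + (\alpha-\beta)\big]D,$$
the differential equation (\ref{Jdiffeq}) says precisely that $T\big(P_n^{\ab}\big) = n(n+1+\alpha+\beta)P_n^{\ab}$, so $T$ is diagonal with respect to the Jacobi basis with eigenvalues $\gamma_k = k(k+1+\alpha+\beta)$. By the correspondence between diagonal CZDOs and CZDS recorded in the introduction, it therefore suffices to prove that $T$ is a CZDO, i.e.\ that $Z_C(T(p)) \le Z_C(p)$ for every real polynomial $p$.

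Since $T$ is second order while the operators furnished by Theorem \ref{bigcor} are first order, I would first lower the order by peeling off a differentiation. Note that $T$ annihilates constants (consistent with $\gamma_0 = 0$) and factors as $T = S \circ D$, where $D$ is differentiation and $S$ is the first-order operator
$$S(u) := (x^2-1)u' + \big[(2+\alpha+\beta)x + (\alpha-\beta)\big]u,$$
since $S(p') = (x^2-1)p'' + [(2+\alpha+\beta)x+(\alpha-\beta)]p' = T(p)$. Because $D$ is a CZDO (Remark \ref{D}) and a composition of CZDOs is again a CZDO (as $Z_C(L_1(L_2 p)) \le Z_C(L_2 p) \le Z_C(p)$), the problem reduces to showing that $S$ is a CZDO.

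The natural first attempt --- matching $S$ to the form $q u' + \alpha_0 q' u$ of Theorem \ref{bigcor} with $q = x^2-1$ --- forces $\alpha_0 q' = 2\alpha_0 x$ to equal $(2+\alpha+\beta)x + (\alpha-\beta)$, which is impossible unless $\alpha = \beta$. This asymmetry is the crux of the difficulty, and I expect it to be the main obstacle. The resolution is to split the factor $x^2-1 = (x-1)(x+1)$ and invoke the three-factor generalization of Theorem \ref{bigcor} recorded in Remark \ref{pqr}. Applying that inequality with the polynomials $x-1$, $x+1$, $u$ (in that order) and the non-negative constants $1+\alpha$ and $1+\beta$ --- here is exactly where the hypotheses $\alpha,\beta > -1$ enter --- and using the identity
$$(x-1)(x+1)u' + (1+\alpha)(x+1)u + (1+\beta)(x-1)u = S(u),$$
one obtains $Z_C(S(u)) \le Z_C(x-1) + Z_C(x+1) + Z_C(u) = Z_C(u)$, since $x-1$ and $x+1$ have only real zeros. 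Hence $S$ is a CZDO.

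Combining the two reductions gives $Z_C(T(p)) = Z_C(S(p')) \le Z_C(p') \le Z_C(p)$, establishing that $T$ is a CZDO and therefore that $\bs k(k+1+\alpha+\beta) \es$ is a $P^{\ab}$-CZDS. The only points needing care are the degenerate cases in which $p$ is constant or linear, so that $u = p'$ is zero or constant; these are handled directly exactly as in the opening lines of the proof of Theorem \ref{bigcor}.
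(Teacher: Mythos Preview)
Your proof is correct and follows essentially the same route as the paper: factor the Jacobi operator as $S\circ D$, then show $S$ is a CZDO by invoking Remark~\ref{pqr} with the factors $x-1$, $x+1$ and the non-negative constants $1+\alpha$, $1+\beta$. The paper's argument is identical in substance, differing only in that it omits your preliminary discussion of why the single-$q$ form of Theorem~\ref{bigcor} is insufficient when $\alpha\neq\beta$.
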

\begin{proof} Define the linear operator $L:\mbb{R}[x]\to\mbb{R}[x]$ by 
$$
L\left(P_k^{\ab}(x)\right) = k(k+1+\alpha+\beta)  P_k^{\ab}(x) \qquad (k=0, 1, 2, \dots)
$$
so that, by linearity, 
$$
L\left(\sum_{k=0}^{n} a_k P_k^{\ab}(x)\right) = \sum_{k=0}^{n} a_k L(P_k^{\ab}(x)) = \sum_{k=0}^{n} a_k k(k+1+\alpha+\beta) P_k^{\ab}(x).
$$
Our goal, then, is to show that $L$ is a CZDO. From the differential equation (\ref{Jdiffeq}), the linear operator $L$ is equal to the differential operator
$$
L  = \left((x^2-1)D+[(2+\alpha+\beta)x + \alpha-\beta ]I\right)D.
$$
If, in Remark \ref{pqr}, we take $p(x)=x-1$, $q(x) = x+1$ and replace $\alpha$ and $\beta$ by $\alpha + 1$ and $\beta+1$, respectively, then we see that 
$$
(x^2-1)D+[(2+\alpha+\beta)x + \alpha-\beta ]I \qquad (\alpha, \beta >-1)
$$ 
is a complex zero decreasing operator. Thus, $L$ is the composition of two CZDO (recall that D is a CZDO as discussed in Remark 3 above) and so it is a CZDO itself.
\end{proof} 
\subsection{Operator Identities}\label{opid}
In order to extend the preceding result, we will develop a number of operator identities. We consider two operators $L_1$ and $L_2$ on $\mbb{R}[x]$ to be equal if $L_1(p) = L_2(p)$ for every real polynomial $p$. For example, as a consequence of the product rule for differentiation, $(Dx)p(x) = xp'(x)+ p(x)$, and thus we obtain the equality
\be\label{Dx}
Dx = xD+I.
\ee
\begin{prop}\label{genD'd} 
Suppose $\{g_k(x)\}_{k=0}^{m}$ is a sequence of polynomials satisfying $\text{deg}(g_k)\leq k$ for all $k$. Then 
$$
D^n \sum_{k=0}^{m} g_k(x) D^k = \left[\sum_{j=0}^{m} \sum_{k=j}^{m} \binom{n}{k-j} g_{k}^{(k-j)}(x) D^{j}\right] D^n.
$$
\end{prop}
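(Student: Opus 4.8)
The plan is to reduce everything to the operator form of the Leibniz rule and then to perform a careful reindexing of the resulting double sum, with the degree hypothesis playing the decisive role in discarding the spurious terms. First I would establish, for an arbitrary polynomial $g$ regarded as a multiplication operator, the identity
$$
D^n g = \sum_{i=0}^{n} \binom{n}{i} g^{(i)} D^{n-i},
$$
which is simply the classical Leibniz rule $(gf)^{(n)} = \sum_{i=0}^n \binom{n}{i} g^{(i)} f^{(n-i)}$ read as an equality of operators; it may be proved by induction on $n$ using the product rule, the base case $n=1$ being the generalization of \eqref{Dx}.

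Next I would apply this identity to each summand of the given operator. For each fixed $k$,
$$
D^n g_k(x) D^k = \left(\sum_{i=0}^{n} \binom{n}{i} g_k^{(i)}(x) D^{n-i}\right) D^k = \sum_{i=0}^{n} \binom{n}{i} g_k^{(i)}(x) D^{n+k-i},
$$
and summing over $k$ expresses the left-hand side of the proposition as the double sum $\sum_{k=0}^{m}\sum_{i=0}^{n} \binom{n}{i} g_k^{(i)}(x) D^{n+k-i}$. I would then substitute $j = k-i$, so that $i = k-j$ and the power of $D$ becomes $n+j$, rewriting each summand as $\binom{n}{k-j} g_k^{(k-j)}(x) D^{n+j}$.

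The key step, and the one that makes the degree hypothesis essential, is to observe that the reindexing naturally produces terms with $j<0$ (coming from $i>k$), but every such term vanishes: since $\deg(g_k) \le k$, the derivative $g_k^{(k-j)}$ has order $k-j > k \ge \deg(g_k)$ and is therefore identically zero. Discarding these terms leaves precisely the index range $0 \le j \le k \le m$. Interchanging the order of summation to sum over $j$ first, and factoring the common $D^n$ to the right, then yields
$$
\sum_{j=0}^{m}\sum_{k=j}^{m} \binom{n}{k-j} g_k^{(k-j)}(x) D^{j} D^n = \left[\sum_{j=0}^{m}\sum_{k=j}^{m} \binom{n}{k-j} g_k^{(k-j)}(x) D^{j}\right] D^n,
$$
which is the desired identity. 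I expect the main obstacle to be purely bookkeeping: tracking the index substitution correctly and verifying that the degree condition eliminates exactly the terms with negative $j$, which is what permits the clean factorization of $D^n$ on the right.
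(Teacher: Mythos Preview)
Your argument is correct and follows essentially the same route as the paper's proof: apply the Leibniz rule in operator form, use the degree hypothesis $\deg(g_k)\le k$ to discard the terms with $i>k$ (equivalently $j<0$), reindex via $j=k-i$, and interchange the order of summation to factor $D^n$ on the right. The only point the paper makes explicit that you leave tacit is the standard convention $\binom{n}{r}=0$ for $r>n$, which is needed so that the final range $j\le k\le m$ introduces no spurious nonzero terms when $m>n$.
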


\begin{proof}
We first note that we are following the convention that $\ds \binom{n}{k}=0$ whenever $k>n$. 

Using the fact that the derivative operator is linear, applying Leibniz' formula for the $n$th derivative of a product, and noting our assumption on the degree of the polynomials $g_k$, we have
\begin{eqnarray*}
 D^n \sum_{k=0}^{m} g_k(x) D^k &=& \sum_{k=0}^{m} \sum_{i=0}^{k} \binom{n}{i}g_k^{(i)}(x) D^{k+n-i}\\
&=& \left[\sum_{k=0}^{m} \sum_{i=0}^{k} \binom{n}{i}g_k^{(i)}(x) D^{k-i}\right]D^n.
\end{eqnarray*}
Making the substitution $j=k-i$ and then switching the order of summation gives
\begin{eqnarray*}
 D^n \sum_{k=0}^{m} g_k(x) D^k &=& \left[\sum_{k=0}^{m} \sum_{j=0}^{k} \binom{n}{k-j}g_k^{(k-j)}(x) D^{j}\right]D^n\\
 &=& \left[\sum_{j=0}^{m} \sum_{k=j}^{m} \binom{n}{k-j}g_k^{(k-j)}(x) D^{j}\right]D^n,
\end{eqnarray*}
as desired.
\end{proof}
In what follows, we will make frequent use of Proposition \ref{genD'd} with $m=2$, which asserts that if
\begin{equation} \label{gD1}
L := D^n (g_2(x) D^2 + g_1(x) D + g_0(x) I),
\end{equation}
then
\begin{equation} \label{gD2}
L = \left(g_2(x) D^2 + (n g_2'(x)+ g_1(x))D + \left(\binom{n}{2} g_2''(x) + n g_1'(x) + g_0(x)\right)I\right)D^n,
\end{equation}
provided deg$(g_k)\leq k$ for all $k$.
\subsection{Ultraspherical Polynomials} \label{JlambdaCZDS}
We now focus on the Jacobi polynomials for which $\alpha=\lambda=\beta$, which are called the ultraspherical polynomials (see, e.g., \cite[p.143]{R}). To ease notation, we define
$$
P_n^{(\lambda)}(x) := P_n^{(\lambda, \lambda)}(x) \qquad (\lambda>-1; n=0, 1, 2, \dots). 
$$
With this choice, the differential equation (\ref{Jdiffeq}) takes on the form 
\be \label{lambdade}
\left[(x^2-1) D^2 + (1+\lambda)2x D\right] P_n^{(\lambda)}(x) = n(n+1+2\lambda)  P_n^{(\lambda)}(x).
\ee 
Due to the frequent use of the operator involved in the previous equation we define, for any $a\in\mbb{R}$, 
\be \label{Phi}
\Phi_a := (x^2-1)D + (1+a)2xI. 
\ee
\begin{lm} \label{JnD'd}
Suppose $c\in \mbb{R}$ and $\lambda>-1$. Then, for all non-negative integers $n$, 
$$
D^n (\Phi_{\lambda}D -n(n+1+2\lambda)I) = \big( \Phi_{\lambda+n} \big)D^{n+1},
$$
where $\Phi_a$ is defined in equation \emph{(\ref{Phi})}.
\end{lm}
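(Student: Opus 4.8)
The plan is to rewrite the operator $\Phi_\lambda D - n(n+1+2\lambda)I$ in the standard second-order form $g_2(x)D^2 + g_1(x)D + g_0(x)I$ and then invoke the operator identity (\ref{gD2}) — the $m=2$ case of Proposition \ref{genD'd} — to commute $D^n$ past it. Everything then reduces to reading off three coefficients and checking one algebraic cancellation.

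First I would expand, using the definition (\ref{Phi}), $\Phi_\lambda D = (x^2-1)D^2 + 2(1+\lambda)xD$, so that the operator appearing on the left of the asserted identity is precisely $g_2 D^2 + g_1 D + g_0 I$ with $g_2(x) = x^2-1$, $g_1(x) = 2(1+\lambda)x$, and $g_0(x) = -n(n+1+2\lambda)$. Since $\deg g_k \le k$ for $k=0,1,2$, the hypothesis of Proposition \ref{genD'd} is satisfied and (\ref{gD2}) applies verbatim. Applying it, the coefficient of $D^2$ is unchanged and equals $x^2-1$; the coefficient of $D$ becomes $n g_2'(x) + g_1(x) = 2nx + 2(1+\lambda)x = 2(1+\lambda+n)x$; and the coefficient of $I$ is $\binom{n}{2}g_2''(x) + n g_1'(x) + g_0(x) = n(n-1) + 2n(1+\lambda) - n(n+1+2\lambda)$.

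The crux of the argument — and the only step involving any computation — is verifying that this last coefficient vanishes identically. A short expansion shows the terms cancel to $0$, so the zeroth-order part drops out entirely, leaving $(x^2-1)D^2 + 2(1+\lambda+n)xD$. By the definition (\ref{Phi}) of $\Phi_a$ with $a = \lambda+n$, this factors as $\Phi_{\lambda+n}D$, and combining it with the trailing $D^n$ supplied by (\ref{gD2}) gives $\Phi_{\lambda+n}D\cdot D^n = \Phi_{\lambda+n}D^{n+1}$, exactly the claimed identity. (The parameter $c$ named in the statement does not appear in the conclusion and may be disregarded.)

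I do not expect a genuine obstacle here: once the operator is in standard form, the lemma is an immediate consequence of Proposition \ref{genD'd}. It is worth flagging, though, that the cancellation of the $I$-coefficient is not accidental — solving $\binom{n}{2}g_2'' + n g_1' + g_0 = 0$ for $g_0$ forces $g_0 = -n(n+1+2\lambda)$, which is precisely the ultraspherical eigenvalue from (\ref{lambdade}). Thus the subtracted constant $n(n+1+2\lambda)I$ is exactly the quantity that makes $D^n$ intertwine $\Phi_\lambda D$ with the shifted operator $\Phi_{\lambda+n}$, and it is this structural fact, rather than the routine bookkeeping, that the proof is really recording.
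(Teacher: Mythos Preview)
Your proof is correct and follows exactly the approach the paper takes: the paper's entire proof is the single sentence ``This is an immediate application of equations (\ref{gD1}) and (\ref{gD2}),'' and you have simply written out the three coefficients and verified the cancellation explicitly. Your closing remark about why the constant $n(n+1+2\lambda)$ is the unique choice that makes the $I$-coefficient vanish is a nice piece of context that the paper leaves implicit.
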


\begin{proof}
This is an immediate application of equations (\ref{gD1}) and (\ref{gD2}).
\end{proof}

We now use a product notation for composition of operators. Since differential operators need not commute, care is required in using this notation. For a collection of operators $L_1, L_2, \dots, L_n$ on $\mbb{R}[x]$, we define
$$
\left(\prod_{k=1}^{n} L_k\right) p := (L_1 L_2 \cdots L_n)p = L_1(L_2(\cdots (L_n(p))))\qquad (p\in\mbb{R}[x]).
$$
\begin{prop}\label{mud}
Let $w$ be a positive integer and $\{m_k\}_{k=0}^{w-1}\subset\mbb{N}$. Then
$$
\prod_{k=0}^{w-1} (\Phi_{\lambda}D-k(k+1+2\lambda)I)^{m_k} = \left[\prod_{k=0}^{w-1}  \left[( \Phi_{\lambda+k}D)^{m_k-1} \Phi_{\lambda+k} \right]\right]D^{w},
$$
where $\Phi_a$ is defined by equation \emph{(\ref{Phi})}.
\end{prop}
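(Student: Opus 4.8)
The plan is to prove the identity by induction on $w$, using Lemma \ref{JnD'd} as the engine for pushing powers of $D$ to the right. The crucial preliminary observation is that Lemma \ref{JnD'd} can be read as an \emph{intertwining relation}: writing $B_k := \Phi_\lambda D - k(k+1+2\lambda)I$ for brevity, the lemma says precisely that $D^k B_k = \Phi_{\lambda+k} D^{k+1} = (\Phi_{\lambda+k}D)D^k$. First I would upgrade this to powers of $B_k$. A short induction on $m$ gives
$$
D^k B_k^{m} = (\Phi_{\lambda+k}D)^m D^k \qquad (m = 1, 2, 3, \dots),
$$
since $D^k B_k^m = (D^k B_k)B_k^{m-1} = (\Phi_{\lambda+k}D)(D^k B_k^{m-1})$, to which the inductive hypothesis applies. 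Here one must respect the non-commutativity of the operators; the content is simply that $D^k$ is peeled out to the right one factor of $B_k$ at a time, each pass producing one factor of $\Phi_{\lambda+k}D$.

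With this in hand I would run the induction on $w$. The base case $w=1$ is immediate: since $B_0 = \Phi_\lambda D$, both sides reduce to $(\Phi_\lambda D)^{m_0}$. For the inductive step, assume the formula holds with $w-1$ in place of $w$. Splitting off the rightmost factor of the product (recall the convention $\prod_{k=0}^{w-1}L_k = L_0 L_1 \cdots L_{w-1}$) and applying the inductive hypothesis yields
$$
\prod_{k=0}^{w-1} B_k^{m_k} = \left[\prod_{k=0}^{w-2}(\Phi_{\lambda+k}D)^{m_k-1}\Phi_{\lambda+k}\right] D^{w-1} B_{w-1}^{m_{w-1}}.
$$
I would then apply the power version of the intertwining relation with $k = w-1$ to the trailing block $D^{w-1}B_{w-1}^{m_{w-1}}$, turning it into $(\Phi_{\lambda+w-1}D)^{m_{w-1}} D^{w-1}$, and finally re-associate
$$
(\Phi_{\lambda+w-1}D)^{m_{w-1}} D^{w-1} = (\Phi_{\lambda+w-1}D)^{m_{w-1}-1}\Phi_{\lambda+w-1} D^{w}
$$
by detaching one $D$ from the last $(\Phi_{\lambda+w-1}D)$ and absorbing it into $D^{w-1}$ to form $D^{w}$. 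This supplies exactly the missing $k=w-1$ factor together with the desired $D^w$, completing the induction.

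The main thing to watch is bookkeeping rather than any genuine difficulty: because the operators do not commute, factors cannot be freely rearranged, so the argument must be organized so that each power of $D$ meets the matching $B_k$ immediately to its right before anything else is done. The two places where this matters are the upgrade of Lemma \ref{JnD'd} from $B_k$ to $B_k^{m}$ and the final re-association displayed above; both are routine once the intertwining viewpoint is adopted. I would also note at the outset that $\mathbb{N}$ is taken to consist of the positive integers, so that each exponent $m_k-1$ appearing on the right-hand side is non-negative and every factor is a genuine polynomial differential operator.
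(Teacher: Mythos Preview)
Your proof is correct and follows essentially the same approach as the paper's: induction on $w$, with the inductive step handled by iterating Lemma \ref{JnD'd} to move the accumulated power of $D$ past the rightmost block $B_{w-1}^{m_{w-1}}$. The only cosmetic difference is that you isolate the iteration as a separate preliminary lemma ($D^k B_k^m = (\Phi_{\lambda+k}D)^m D^k$), whereas the paper simply says ``applying Lemma \ref{JnD'd} a total of $m_w$ times'' inside the inductive step.
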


\begin{proof}
We will argue by mathematical induction. The case $w=1$ is clear. Now suppose that the result is true for some integer $w\geq 1$ and fix natural numbers $m_0, m_1, \dots, m_w$. Then 
\be\label{theta1}
\prod_{k=0}^{w} (\Phi_{\lambda} D -k(k+1+2\lambda)I)^{m_k} = \Theta D^{w} (\Phi_{\lambda}D-w(w+1+2\lambda)I)^{m_w},
\ee
where
\be\label{theta2}
\Theta = \prod_{k=0}^{w-1}  \left[\big( \Phi_{\lambda+k}D\big)^{m_k-1}\big(\Phi_{\lambda+k}\big)\right].
\ee
Applying lemma \ref{JnD'd} a total of $m_w$ times we see that 
\be\label{theta3}
D^{w} ( \Phi_{\lambda}D-w(w+1+2\lambda)I)^{m_w}  = ( \Phi_{\lambda+w}D )^{m_w}D^{w}.
\ee
Together, equations (\ref{theta1}), (\ref{theta2}), and (\ref{theta3}) show that 
$$
\prod_{k=0}^{w} (\Phi_{\lambda}D-k(k+1+2\lambda)I)^{m_k} = \left[\prod_{k=0}^{w}  \left[\big( \Phi_{\lambda+k}D \big)^{m_k-1}\big(\Phi_{\lambda+k}\big)\right] \right]D^{w+1}
$$
as desired. 
\end{proof}

We are now in a position to demonstrate the existence of several $P^{(\lambda)}$-CZDS for any fixed $\lambda>-1$.

\begin{thm}\label{JCZDS} If $\lambda>-1$,  $w$ is a positive integer, and $\{m_k\}_{k=0}^{w-1}\subset\mbb{N}$, then the sequence
\be\label{Jfallingfactorial}
\left\{ \prod_{k=0}^{w-1}(n(n+1+2\lambda)-k(k+1+2\lambda))^{m_k} \right\}_{n=0}^{\infty}
\ee
is a $P^{(\lambda)}$-CZDS, where $P^{(\lambda)}$ is the set of ultraspherical polynomials.
\end{thm}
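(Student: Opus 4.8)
The plan is to realize the diagonal operator attached to the sequence (\ref{Jfallingfactorial}) as a composition of operators already known to be complex zero decreasing. Define $L:\mbb{R}[x]\to\mbb{R}[x]$ by
$$
L\left(P_n^{(\lambda)}(x)\right) = \left[\prod_{k=0}^{w-1}\big(n(n+1+2\lambda)-k(k+1+2\lambda)\big)^{m_k}\right] P_n^{(\lambda)}(x) \qquad (n=0,1,2,\dots),
$$
so that the claim is equivalent to showing $L$ is a CZDO. From the differential equation (\ref{lambdade}) and the definition (\ref{Phi}) we have $\Phi_{\lambda}D\,P_n^{(\lambda)} = n(n+1+2\lambda)P_n^{(\lambda)}$; that is, each $P_n^{(\lambda)}$ is an eigenvector of $\Phi_{\lambda}D$ with eigenvalue $n(n+1+2\lambda)$. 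Consequently, writing $\psi(t):=\prod_{k=0}^{w-1}(t-k(k+1+2\lambda))^{m_k}$, the differential operator $\psi(\Phi_{\lambda}D)=\prod_{k=0}^{w-1}(\Phi_{\lambda}D-k(k+1+2\lambda)I)^{m_k}$ is diagonal in the ultraspherical basis with eigenvalues $\psi(n(n+1+2\lambda))$, which are exactly the terms of (\ref{Jfallingfactorial}). Since both operators are linear and agree on the basis $\{P_n^{(\lambda)}\}$, I conclude that $L=\prod_{k=0}^{w-1}(\Phi_{\lambda}D-k(k+1+2\lambda)I)^{m_k}$.

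Next I would invoke Proposition \ref{mud} to rewrite this differential operator as
$$
L = \left[\prod_{k=0}^{w-1}\left[(\Phi_{\lambda+k}D)^{m_k-1}\Phi_{\lambda+k}\right]\right]D^{w}.
$$
Reading the right-hand side as an ordered composition, $L$ is built entirely from repeated copies of the operators $D$ and $\Phi_{\lambda+k}$ for $k=0,1,\dots,w-1$. The remaining task is therefore to verify that each building block is a CZDO and to observe that a composition of CZDO is again a CZDO: if $L_1$ and $L_2$ are CZDO, then $Z_C(L_1 L_2 p)\leq Z_C(L_2 p)\leq Z_C(p)$ for every $p\in\mbb{R}[x]$.

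That $D$ is a CZDO is recorded in Remark \ref{D}. To see that $\Phi_{\lambda+k}$ is a CZDO, I would apply Remark \ref{pqr} with $p(x)=x-1$, $q(x)=x+1$, $r=f$ the polynomial input, and constants $\alpha=\beta=1+\lambda+k$, exactly as in the proof of Proposition \ref{JaCZDS}; this gives
$$
\Phi_{\lambda+k}f = (x^2-1)f'(x)+2(1+\lambda+k)x f(x) = pqr'+\alpha p'qr+\beta pq'r,
$$
and since $\lambda>-1$ and $0\leq k\leq w-1$ force $1+\lambda+k>0$, the hypotheses $\alpha,\beta\geq 0$ hold, whence $Z_C(\Phi_{\lambda+k}f)\leq Z_C(f)$ because $Z_C(x-1)=Z_C(x+1)=0$. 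Assembling these facts shows $L$ is a CZDO, which completes the argument. I expect the only genuinely delicate point to be the bookkeeping of the first paragraph, namely confirming that the polynomial expression in the single operator $\Phi_{\lambda}D$ is diagonalized with eigenvalues $\psi(n(n+1+2\lambda))$, since this is precisely what converts the opaque product sequence (\ref{Jfallingfactorial}) into the transparently factorable operator supplied by Proposition \ref{mud}; once that identification is secured, the CZDO property follows mechanically from Remark \ref{pqr} and closure under composition.
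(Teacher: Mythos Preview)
Your proposal is correct and follows essentially the same approach as the paper: identify $L$ with $\prod_{k=0}^{w-1}(\Phi_{\lambda}D-k(k+1+2\lambda)I)^{m_k}$ via the differential equation (\ref{lambdade}), apply Proposition~\ref{mud}, and then check that each factor $\Phi_{\lambda+k}$ and $D$ is a CZDO. The only cosmetic difference is that you verify $\Phi_{\lambda+k}$ is a CZDO via Remark~\ref{pqr} (as in the proof of Proposition~\ref{JaCZDS}), whereas the paper invokes Theorem~\ref{bigcor} directly with $q(x)=x^2-1$ and $\alpha=1+\lambda+k$; both routes are immediate.
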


\begin{proof}
Let the linear operator $L:\mbb{R}[x]\to\mbb{R}[x]$ be defined by 
$$
L(P_n^{(\lambda)}(x)) = \left(\prod_{k=0}^{w-1}(n(n+1+2\lambda)-k(k+1+2\lambda))^{m_k} \right)P_n^{(\lambda)}(x).
$$
From the differential equation (\ref{lambdade}), we have
$$
L = \prod_{k=0}^{w-1} ((x^2-1)D^2+(1+\lambda)2xD-k(k+1+2\lambda)I)^{m_k},
$$
or, using the notation in equation (\ref{Phi}) and applying Proposition \ref{mud},
$$
L =  \prod_{k=0}^{w-1} ( \Phi_{\lambda} D-k(k+1+2\lambda)I)^{m_k} = \left[\prod_{k=0}^{w-1}  \left[( \Phi_{\lambda+k}D)^{m_k-1} \Phi_{\lambda+k} \right]\right]D^{w}.
$$
The operator $L$ is, therefore, a composition of individual operators, each of which are CZDO. This can be seen by appealing to Theorem \ref{bigcor}, which shows that $\Phi_a$ is a CZDO whenever $a>-1$. 
\end{proof}

\subsection{CZDS for Legendre Basis}

The polynomials 
$$
P_n(x) := P_n^{(0)}(x) = P_n^{(0,0)}(x) \qquad (n=0, 1, 2, \dots)
$$
are known as the Legendre polynomials (see \cite[p. 254]{R}).

In \cite{BDFU}, open question (4) conjectures that a certain type of falling factorial sequence is a multiplier sequence for the Legendre basis ($P$-MS). Since every $P$-CZDS is a $P$-MS, we can apply the results of the previous section to settle a variation of this question. 

\begin{cor}\label{PCZDS} If $w$ is a positive integer and $\{m_k\}_{k=0}^{w-1}\subset\mbb{N}$, then the sequence
\be\label{Pfallingfactorial}
\left\{ \prod_{k=0}^{w-1}(n(n+1)-k(k+1))^{m_k} \right\}_{n=0}^{\infty} = \left\{ \prod_{k=0}^{w-1}((n+k+1)(n-k))^{m_k} \right\}_{n=0}^{\infty}
\ee
is a CZDS for the Legendre basis.
\end{cor}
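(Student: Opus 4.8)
The plan is to obtain this corollary as the immediate specialization $\lambda = 0$ of Theorem \ref{JCZDS}. The key observation is that, by the definition given at the start of this subsection, the Legendre polynomials coincide with the ultraspherical polynomials for the parameter value $\lambda = 0$; that is, $P_n(x) = P_n^{(0)}(x)$. Since $0 > -1$, the parameter restriction $\lambda > -1$ required by Theorem \ref{JCZDS} is satisfied, so the theorem applies verbatim with $\lambda = 0$.

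First I would invoke Theorem \ref{JCZDS} with $\lambda = 0$, a positive integer $w$, and a sequence $\{m_k\}_{k=0}^{w-1} \subset \mbb{N}$. Substituting $\lambda = 0$ into the sequence (\ref{Jfallingfactorial}) replaces each factor $n(n+1+2\lambda) - k(k+1+2\lambda)$ by $n(n+1) - k(k+1)$, so the theorem immediately yields that
$$
\left\{ \prod_{k=0}^{w-1}(n(n+1)-k(k+1))^{m_k} \right\}_{n=0}^{\infty}
$$
is a $P^{(0)}$-CZDS, which is precisely a CZDS for the Legendre basis. This establishes the corollary with the sequence written in the first of the two forms appearing in (\ref{Pfallingfactorial}).

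To complete the argument I would verify the elementary algebraic identity recorded in (\ref{Pfallingfactorial}), namely $n(n+1) - k(k+1) = (n+k+1)(n-k)$, which is just the factorization $n^2 + n - k^2 - k = (n-k)(n+k) + (n-k) = (n+k+1)(n-k)$. I do not expect any substantive obstacle here: the entire content of the corollary is supplied by Theorem \ref{JCZDS}, and the only remaining step is this one-line factorization, whose purpose is merely to exhibit the $P^{(0)}$-CZDS in a factored form that makes the zeros of its interpolating polynomial transparent.
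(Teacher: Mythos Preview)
Your proposal is correct and follows exactly the paper's approach: the paper's proof consists of the single sentence ``Apply Theorem \ref{JCZDS} with $\lambda=0$.'' Your additional verification of the factorization $n(n+1)-k(k+1)=(n+k+1)(n-k)$ is a harmless elaboration that the paper leaves implicit.
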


\begin{proof}
Apply Theorem \ref{JCZDS} with $\lambda=0$.
\end{proof}

Corollary \ref{PCZDS} strengthens and extends some of the results obtained in \cite{BDFU} by showing that $\bs k^2+k \es$ is a $P$-CZDS, and by demonstrating the existence of $P$-CZDS (and hence $P$-multiplier sequences) which are not products of quadratic $P$-multiplier sequences.

\subsection{CZDS for the Chebyshev Basis}
The Chebyshev polynomials $\mathcal{T} = \{T_n(x)\}$ and $\mathcal{U} = \{U_n(x)\}$ of the first and second kind, respectively, can be defined by (see \cite[p. 301]{R})
$$
T_n(x) := \frac{n!}{(\frac{1}{2})_n} P_n^{(-1/2)}(x) \qquad (n=0, 1, 2, \dots),
$$
$$
U_n(x) := \frac{(n+1)!}{(\frac{3}{2})_n} P_n^{(1/2)}(x) \qquad  (n=0, 1, 2, \dots),
$$
where $(a)_n := a(a+1)\cdots (a+n-1) $ is the rising factorial. In \cite[Lemma 156]{P} it is shown that a sequence $\{\gamma_k\}_{k=0}^{\infty}$ is a CZDS for a simple set $Q = \{q_k(x)\}_{k=0}^{\infty}$ if and only if it is a $\widehat{Q}$-CZDS, where $\widehat{Q}$ consists of the polynomials
$$
\widehat{q}_n(x) = c_n q_n(\alpha x + \beta) \qquad (\beta\in \mbb{R}; \alpha, c_n \in \mbb{R}\setminus\{0\}).
$$
Combining this with Theorem \ref{JCZDS}, we arrive at the following corollary. 
\begin{cor} \label{TCZDS}
If $w$ is a positive integer and $\{m_k\}_{k=0}^{w-1}\subset\mbb{N}$, then 
\begin{enumerate}
\item the sequence $\left\{ \prod_{k=0}^{w-1}(n^2-k^2)^{m_k} \right\}_{n=0}^{\infty}$ is a $\mathcal{T}$-CZDS, and
\item the sequence $\left\{ \prod_{k=0}^{w-1}(n(n+2)-k(k+2))^{m_k} \right\}_{n=0}^{\infty}$ is a $\mathcal{U}$-CZDS.
\end{enumerate}
\end{cor}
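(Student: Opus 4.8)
The plan is to realize both Chebyshev families as scalar rescalings of ultraspherical polynomials and then transport the complex zero decreasing property furnished by Theorem \ref{JCZDS} across this rescaling by means of the equivalence recorded in \cite[Lemma 156]{P}. Thus the entire argument reduces to two specializations of the parameter $\lambda$ together with one application of the transfer lemma.

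First I would specialize Theorem \ref{JCZDS}. For part (1), set $\lambda = -\tfrac{1}{2}$; this satisfies the hypothesis $\lambda > -1$, and since $1 + 2\lambda = 0$ the factor $n(n+1+2\lambda) - k(k+1+2\lambda)$ collapses to $n^2 - k^2$. Hence Theorem \ref{JCZDS} gives that $\left\{\prod_{k=0}^{w-1}(n^2 - k^2)^{m_k}\right\}_{n=0}^{\infty}$ is a $P^{(-1/2)}$-CZDS. For part (2), set $\lambda = \tfrac{1}{2}$; now $1 + 2\lambda = 2$, so the same factor becomes $n(n+2) - k(k+2)$, and Theorem \ref{JCZDS} yields that $\left\{\prod_{k=0}^{w-1}(n(n+2) - k(k+2))^{m_k}\right\}_{n=0}^{\infty}$ is a $P^{(1/2)}$-CZDS.

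Next I would invoke the transfer lemma. The defining relations $T_n(x) = \frac{n!}{(1/2)_n} P_n^{(-1/2)}(x)$ and $U_n(x) = \frac{(n+1)!}{(3/2)_n} P_n^{(1/2)}(x)$ exhibit each Chebyshev polynomial as $\widehat{q}_n(x) = c_n q_n(\alpha x + \beta)$ in the sense of \cite[Lemma 156]{P}, taking $\alpha = 1$, $\beta = 0$, and $c_n$ to be the indicated normalizing constants; these constants are nonzero for every $n$ because $n!$ and the rising factorials $(1/2)_n$, $(3/2)_n$ are positive. Since the ultraspherical polynomials form a simple set, the lemma asserts that a sequence is a CZDS for $\{P_n^{(\lambda)}\}_{n=0}^{\infty}$ exactly when it is a CZDS for the associated Chebyshev basis. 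Applying this to the two sequences identified above delivers both claims.

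I do not expect a substantive obstacle here: the work is purely a matter of bookkeeping. The only points demanding care are confirming that the chosen values of $\lambda$ reproduce verbatim the factors $n^2 - k^2$ and $n(n+2) - k(k+2)$ appearing in the statement, and checking that the normalizing constants $c_n$ never vanish so that the hypotheses of \cite[Lemma 156]{P} are genuinely met.
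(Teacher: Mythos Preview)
Your proposal is correct and follows precisely the paper's own approach: the paper invokes \cite[Lemma 156]{P} in the paragraph immediately preceding the corollary and then proves it by the single line ``Apply Theorem \ref{JCZDS} with $\lambda=-1/2$ and again with $\lambda=1/2$.'' Your write-up simply makes explicit the algebraic verification that $n(n+1+2\lambda)-k(k+1+2\lambda)$ specializes to $n^2-k^2$ and $n(n+2)-k(k+2)$, and that the normalizing constants are nonzero, which the paper leaves to the reader.
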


\begin{proof}
Apply Theorem \ref{JCZDS} with $\lambda=-1/2$ and again with $\lambda = 1/2$.
\end{proof}

\section{Simultaneous Generation of a Basis $B$ and a class of $B$-CZDS}\label{generate}

Given a basis $B$ and a sequence $\seq$, a typical strategy in showing that $\seq$ is a $B$-CZDS is to find a differential operator representation for the diagonal operator which is a CZDO. In this section, we begin with a known CZDO and use it to simultaneously generate a basis $B$ and a corresponding $B$-CZDS. Our results focus on bases which are {\it simple sets}, i.e., for which deg$(b_k) = k$ for all $k$.
\begin{thm}
Let $\alpha\geq 0$ and let 
$$
q(x) = c_0+ c_1x + \cdots + c_r x^r \qquad (r \geq 1, c_r \neq 0)
$$ 
be a real polynomial with only real zeros. If the simple set of real polynomials $B = \{b_k(x)\}_{k=0}^{\infty}$ satisfy the differential equation
\begin{equation}\label{bde}
\gamma_n b_n(x) =  q(x) b_n^{(r)}(x)+\alpha q'(x) b_n^{(r-1)} (x) \qquad (n=0, 1, 2, \dots),
\end{equation}
where $\{\gamma_n\}_{n=0}^{\infty}$ is a sequence of real numbers, then 
$$
\gamma_n = c_r \frac{n!(n+(\alpha-1)r+1)}{(n-r+1)!} \qquad (n=0, 1, 2, \dots)
$$
and $\{\gamma_n\}_{n=0}^{\infty}$ is a $B$-CZDS.
\end{thm}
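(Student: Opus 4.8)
The theorem makes two assertions: first, an explicit formula for the eigenvalues $\gamma_n$, and second, that the resulting sequence is a $B$-CZDS. The plan is to extract the formula from a leading-coefficient comparison in the differential equation \eqref{bde}, and then to recognize the associated diagonal operator as a CZDO by invoking Theorem \ref{bigcor} together with the fact that $D$ is a CZDO (Remark \ref{D}).

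First I would derive the formula for $\gamma_n$. Since $B$ is a simple set, write $b_n(x) = a_n x^n + (\text{lower order})$ with $a_n \neq 0$. The left side of \eqref{bde} has leading term $\gamma_n a_n x^n$. On the right side, the operator is $q(x)D^r + \alpha q'(x) D^{r-1}$; only the top-degree part $c_r x^r$ of $q$ and $c_r r x^{r-1}$ of $q'$ contribute to the $x^n$ term. Computing the leading coefficient of $c_r x^r b_n^{(r)}(x) + \alpha c_r r x^{r-1} b_n^{(r-1)}(x)$ gives
\begin{equation*}
c_r a_n \frac{n!}{(n-r)!}\,x^n + \alpha c_r r\, a_n \frac{n!}{(n-r+1)!}\,x^n.
\end{equation*}
Equating the coefficient of $x^n$ on both sides and dividing by $a_n$, I would factor out $c_r \, n!/(n-r+1)!$ and simplify the bracket $\big[(n-r+1) + \alpha r\big]$ to the claimed $\big(n + (\alpha-1)r + 1\big)$, which is the stated formula. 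One routine point to verify here is that the degree-$n$ term on the right genuinely arises only from the leading terms of $q$ and $q'$ (lower-degree terms of $q$ applied to the top derivatives produce degree strictly below $n$), and that the formula handles $n < r$ correctly via the factorial convention, since for such $n$ the derivatives $b_n^{(r)}$ vanish and $\gamma_n$ should be consistent with that.

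Next I would establish the CZDS claim. The diagonal operator $L$ defined by $L(b_n) = \gamma_n b_n$ agrees, by \eqref{bde} and linearity, with the differential operator
\begin{equation*}
L = \big(q(x)D + \alpha q'(x) I\big) D^{r-1}.
\end{equation*}
By Theorem \ref{bigcor}, since $q$ has only real zeros and $\alpha \geq 0$, the operator $q(x)D + \alpha q'(x)I$ is a CZDO; and $D^{r-1}$ is a composition of the CZDO $D$ with itself (Remark \ref{D}). A composition of CZDO is a CZDO, so $L$ is a CZDO, which is precisely the statement that $\{\gamma_n\}_{n=0}^{\infty}$ is a $B$-CZDS.

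The main obstacle I anticipate is justifying that the differential equation \eqref{bde}, which is stated as a relation on the individual basis elements $b_n$, correctly identifies $L$ as the \emph{single} differential operator $\big(q D + \alpha q' I\big)D^{r-1}$ acting on all of $\mathbb{R}[x]$ by linearity. The subtlety is that two different operators can agree on a basis yet the factored differential form must be shown to reproduce the eigenvalue action on every $b_n$ simultaneously; this is exactly what \eqref{bde} guarantees, so the step is really a matter of reading the hypothesis correctly and then applying the already-established composition result. Aside from that bookkeeping, the leading-coefficient computation is the only genuinely computational ingredient, and it is elementary.
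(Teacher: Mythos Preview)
Your proposal is correct and follows essentially the same route as the paper: a leading-coefficient comparison to pin down $\gamma_n$, followed by the factorization $L=(qD+\alpha q'I)\,D^{r-1}$ and an appeal to Theorem~\ref{bigcor} and Remark~\ref{D}. The only difference is cosmetic: the paper splits the eigenvalue computation into the three cases $n<r-1$, $n=r-1$, $n>r-1$ (so that the vanishing of $b_n^{(r)}$ and $b_n^{(r-1)}$ is handled explicitly), whereas you absorb the small-$n$ cases into a falling-factorial convention for $n!/(n-r+1)!$; either treatment is fine once spelled out.
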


\begin{proof}
Writing the polynomials $b_n$ in the form
$$
b_n(x) = \sum_{k=0}^{n} \frac{a_{n,k}}{k!} x^k \qquad (a_{n,n}\neq 0), 
$$
we have
$$
b_n^{(j)}(x) = \sum_{k=0}^{n-j}\frac{a_{n,k+j}}{k!}{x^{k}}
$$
and so the leading coefficient of the $j^{th}$ derivative of $b_n$ is $a_{n,n} / (n-j)!$. 
\vskip .1 in 
\nin {\bf Case 1:} $n < r-1$.  In this case, the differential equation becomes
$$
\gamma_n b_n(x) = 0
$$
from which we must have $\gamma_n=0$.
\vskip .1 in 
\nin {\bf Case 2:} $n=r-1$. In this case, the differential equation becomes
$$
\gamma_{r-1} b_{r-1}(x) = \alpha q'(x) {a_{r-1,r-1}} 
$$
and equating leading coefficients, we obtain
$$
\gamma_{r-1} \frac{a_{r-1,r-1}}{(r-1)!} = \alpha r c_r {a_{r-1,r-1}} 
$$
or
$$
\gamma_{r-1}  = \alpha r! c_r.  
$$
\vskip .1 in 
\nin {\bf Case 3:} $n > r-1$.
If we assume (\ref{bde}) is satisfied, then we can equate leading coefficients on each side of the equals sign to obtain
$$
\gamma_n \frac{a_{n,n}}{n!} =  c_r \frac{a_{n,n}}{(n-r)!}+ \alpha r c_r \frac{a_{n,n}}{(n-(r-1))!} .
$$ 
From this, it follows that 
$$
\gamma_n = c_r \frac{n!(n+(\alpha-1)r+1)}{(n-r+1)!}
$$
as desired. 

To show that $\{\gamma_n\}_{n=0}^{\infty}$ is a $B$-CZDS, suppose 
$$
g(x) = \sum_{k=0}^{m} d_k b_k(x) \qquad (d_m \neq 0)
$$
is a real polynomial. Then 
$$
Z_C(g(x)) \geq Z_C(g^{(r-1)}(x)) \geq Z_C( q(x) g^{(r)}(x)+\alpha q'(x) g^{(r-1)}(x)),
$$
where we have made use of Remark \ref{D} and Theorem \ref{bigcor}. 
Since
\begin{eqnarray*}
 q(x) g^{(r)}(x)+\alpha q'(x) g^{(r-1)}(x) &=& \sum_{k=0}^m d_k (q(x) b_k^{(r)}(x)+\alpha q'(x) b_k^{(r-1)}(x) ) \\
&=&  \sum_{k=0}^m \gamma_k d_k b_k(x),
\end{eqnarray*}
the desired result is obtained.
\end{proof}

As an example, if we choose $q(x) = (x+1)^3$ and $\alpha = 1$, then the corresponding sequence would be $\gamma_n = (n+1)n(n-1)$, and we would need to find a simple set $B = \{b_n(x)\}_{n=0}^{\infty}$ which solves the differential equation
\begin{equation}{\label{DEex}}
(n+1) n (n-1) b_n(x) = (x+1)^3 b_n'''(x)+3(x+1)^2 b_n''(x)  \qquad (n=0, 1, 2, \dots).
\end{equation}
With some effort, one finds that sets $B$ which solve equation (\ref{DEex}) do exist, each of which has the form 
$$
b_0(x) = r,
$$$$
b_1(x) = sx+t,
$$$$
 b_n(x) = c_n (x+1)^n \quad (n=2, 3, 4, \dots)
$$
where $t\in\mbb{R}$ and $r, s, c_2, c_3, \dots$ are any (fixed) non-zero real numbers. Thus, the sequence 
$$
\{(n+1)n(n-1)\}_{n=0}^{\infty}
$$ 
is a $B$-CZDS for any such basis $B$.

\section{An Extension to Certain Transcendental Entire Functions}

\subsection{The Laguerre-P\'olya Class}\label{LP}
A real entire function $ \varphi$ is said to belong to the \textit{Laguerre-P\'olya class}, denoted $\varphi\in \mathcal{L-P}$, if it can be written in the form
\be
\varphi(x)=cx^me^{-ax^2+bx}\prod_{k=1}^\omega \left(1+\frac{x}{x_k}\right)e^{-x/x_k}
\ee
where $b,c,x_k\in \mbb{R}$,  $m$ is a non-negative integer, $a\geq 0$, $0\leq \omega \leq \infty$, and $ \sum_{k=1}^{\omega} {x_k^{-2}}<\infty$.  

An alternate characterization of this class is as follows: $\varphi\in\lp$ if and only if $\varphi$ is the uniform limit on compact subsets of $\mbb{C}$ of real polynomials having only real zeros (See, for example, \cite[Ch. VIII ]{Levin} or \cite[Satz 9.2]{O}). This point of view, together with Hurwitz' theorem (see \cite[p. 4]{Marden}) allow us to obtain some useful extensions of results in Section \ref{CZDO}.

\begin{thm}\label{LPcor}
Suppose $\varphi$ belongs to the class $\lp$, $p$ and $q$ are real polynomials, and $\alpha\geq 0$.  Then 
$$
Z_C(\varphi  q p'+\alpha (\varphi q )' p ) \leq  Z_C(p)+Z_C(q).
$$
\end{thm}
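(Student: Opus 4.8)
The plan is to reduce the transcendental statement to the polynomial case already settled in Theorem~\ref{bigcor}, and then to pass to a limit using the approximation characterization of $\lp$ together with Hurwitz' theorem. First I would invoke the characterization recalled just above: since $\varphi\in\lp$, there is a sequence of real polynomials $\varphi_n$, each having only real zeros, with $\varphi_n\to\varphi$ uniformly on compact subsets of $\mbb{C}$. Because compact uniform convergence of analytic functions forces compact uniform convergence of their derivatives, we also have $\varphi_n'\to\varphi'$. Writing $Q_n := \varphi_n q$, which is a genuine real polynomial, and setting
$$
f_n := Q_n p' + \alpha Q_n' p = \varphi_n q\, p' + \alpha(\varphi_n q)' p, \qquad f := \varphi q\, p' + \alpha(\varphi q)' p,
$$
the compact convergence of $\varphi_n$ and $\varphi_n'$, together with the fixed polynomials $p,p',q,q'$, gives $f_n\to f$ uniformly on compact subsets of $\mbb{C}$.

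Next I would extract a bound on $Z_C(f_n)$ that is uniform in $n$. Applying Theorem~\ref{bigcor} with the polynomials $p$ and $Q_n$ in the roles of $p$ and $q$ yields
$$
Z_C(f_n) = Z_C\big(Q_n p' + \alpha Q_n' p\big) \le Z_C(p) + Z_C(Q_n).
$$
Since the non-real zeros of a product are, with multiplicity, the union of the non-real zeros of the factors, and since $\varphi_n$ has only real zeros, we obtain $Z_C(Q_n) = Z_C(\varphi_n) + Z_C(q) = Z_C(q)$. Hence $Z_C(f_n)\le Z_C(p)+Z_C(q) =: N$ for every $n$, a bound that does not depend on $n$.

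Finally I would transfer this bound to $f$. If $f\equiv 0$ then $Z_C(f)=0\le N$ and there is nothing to prove, so assume $f\not\equiv 0$. Suppose, toward a contradiction, that $f$ has more than $N$ non-real zeros counted with multiplicity; choose finitely many distinct non-real zeros $z_1,\dots,z_s$ of $f$ whose multiplicities sum to at least $N+1$, and enclose each $z_i$ in a closed disk so small that the disks are pairwise disjoint, disjoint from the real axis, and contain no other zero of $f$. Since $f\not\equiv 0$ is nonvanishing on the (compact) boundary circles and $f_n\to f$ uniformly there, the argument principle, i.e. Hurwitz' theorem, shows that for all large $n$ the function $f_n$ has inside each such disk exactly as many zeros as $f$ does. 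Summing over the disks, $f_n$ then has at least $N+1$ non-real zeros, contradicting $Z_C(f_n)\le N$. Therefore $Z_C(f)\le N = Z_C(p)+Z_C(q)$.

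The routine parts, namely the product rule identity $(\varphi_n q)'=\varphi_n' q+\varphi_n q'$ and the additivity of $Z_C$ over products, cause no trouble. The step requiring genuine care is the final limiting argument: one must ensure that the bound $N$ is truly uniform in $n$, which it is precisely because $Z_C(Q_n)=Z_C(q)$ does not grow with $n$, and that Hurwitz' theorem is applied on disks chosen to isolate the non-real zeros away from the real axis, so that the zeros of $f_n$ captured there are genuinely counted in $Z_C(f_n)$. This is the main obstacle, although it is a standard application of the argument principle once the uniform bound and the compact convergence $f_n\to f$ are in place.
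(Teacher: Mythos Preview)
Your proposal is correct and follows essentially the same route as the paper: approximate $\varphi$ by real polynomials with only real zeros, apply Theorem~\ref{bigcor} to the polynomial $Q_n=\varphi_n q$ in place of $q$ (using $Z_C(Q_n)=Z_C(q)$), and pass to the limit via Hurwitz' theorem. Your write-up simply supplies more detail on the limiting step and the $f\equiv 0$ case than the paper does.
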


\begin{proof}
Suppose $\bs f_k \es$ is a sequence of real polynomials with only real zeros which converge uniformly on compact subsets of $\mbb{C}$ to $\varphi.$ By Theorem \ref{bigcor},
$$
Z_C( f_k q p'+\alpha (f_k q)' p ) \leq Z_C(p)+Z_C(q) \qquad (k=0, 1, 2, \dots).
$$
Since $ f_k q p'+\alpha (f_k q)' p $ converges uniformly on compact subsets of $\mbb{C}$ to $\alpha (\varphi q)' p + \varphi q p'$, Hurwitz' theorem gives the desired result. 
\end{proof}
In order to prove an extension of Laguerre's theorem related to $H$-CZDS (Theorem \ref{hmslaguerregen} in the Introduction), Piotrowski first proved a special case as a lemma. We now show how to obtain a new proof of this lemma using Theorem \ref{LPcor}.

\begin{cor}\label{lmgenBC}\emph{(\cite[p. 55, Lemma 67]{P})}
Suppose that $p(x)$ is a real polynomial of degree $n$.  If $c,d,\beta$ are real numbers such that $c\geq 0$ and $\beta\geq 0$, then
$$
Z_C\big( (cx+d)p(x)-\beta p'(x)\big) \leq Z_C\big(p(x)\big).
$$
\end{cor}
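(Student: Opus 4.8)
The plan is to realize the operator $p\mapsto (cx+d)p-\beta p'$ as an instance of the more general operator appearing in Theorem \ref{LPcor}, by choosing the polynomial $q$, the $\lp$-function $\varphi$, and the parameter $\alpha$ appropriately. The key observation is that Theorem \ref{LPcor} produces expressions of the form $\varphi q p' + \alpha(\varphi q)' p$, in which $p$ is multiplied by the \emph{derivative} of $\varphi q$, while $p'$ is multiplied by $\varphi q$ itself. To match the target $(cx+d)p-\beta p'$, I would like $p'$ to appear with a constant coefficient and $p$ with a linear coefficient; this suggests taking $\varphi q$ to be (up to a negative sign and scaling) a Gaussian-type exponential, whose derivative is a linear polynomial times itself.

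Concretely, I would set $q(x)\equiv 1$ (so $Z_C(q)=0$ and the bound on the right becomes simply $Z_C(p)$) and choose $\varphi(x) = e^{-\frac{c}{2\beta}x^2 - \frac{d}{\beta}x}$ when $\beta>0$. Since $c\geq 0$ and $\beta\geq 0$, the quadratic coefficient $-c/(2\beta)$ is $\leq 0$, so this $\varphi$ lies in $\lp$ (it has the form $e^{-ax^2+bx}$ with $a\geq 0$, no real or complex zeros at all). With this choice $\varphi'/\varphi = -\frac{c}{\beta}x - \frac{d}{\beta}$, so $\varphi' = -\frac{1}{\beta}(cx+d)\varphi$. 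Taking $\alpha = \beta$ in Theorem \ref{LPcor}, the operator becomes
$$
\varphi p' + \beta \varphi' p = \varphi\left(p' - (cx+d)p\right),
$$
and Theorem \ref{LPcor} then yields $Z_C\!\left(\varphi(p'-(cx+d)p)\right)\leq Z_C(p)$. Since $\varphi$ is zero-free, multiplying by $\varphi$ does not change the number of non-real zeros, and the same holds after multiplying through by $-1$; hence $Z_C\!\left((cx+d)p-\beta p'\right)=Z_C\!\left(\varphi(p'-(cx+d)p)\right)\leq Z_C(p)$, as desired.

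The main obstacle is the boundary case $\beta=0$, where the substitution $e^{-\frac{c}{2\beta}x^2}$ is undefined. This case must be handled separately: when $\beta=0$ the target reduces to $(cx+d)p(x)$, and multiplication by a real linear factor adds at most a real zero (it changes the degree by one and the new zero $-d/c$ is real, or the factor is the nonzero constant $d$ when $c=0$), so $Z_C((cx+d)p)=Z_C(p)$ trivially. A second, more delicate point is verifying that multiplication by the zero-free entire function $\varphi$ genuinely preserves $Z_C$ rather than merely bounding it; this is clear because $\varphi q p' + \alpha(\varphi q)'p = \varphi\cdot\bigl(qp'+\alpha q'p + \alpha(\varphi'/\varphi)qp\bigr)$ only factors cleanly as $\varphi\cdot(\text{polynomial})$ when $\varphi'/\varphi$ is a polynomial, which holds precisely for the exponential-of-polynomial choice above, so I should confirm that $p'-(cx+d)p$ is an honest polynomial of the same degree considerations as $(cx+d)p-\beta p'$ and that no spurious zeros are introduced or lost. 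Once these checks are in place, the corollary follows immediately from Theorem \ref{LPcor}.
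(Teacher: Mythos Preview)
Your approach is exactly the paper's: handle $\beta=0$ separately, and for $\beta>0$ apply Theorem~\ref{LPcor} with $q\equiv 1$ and a Gaussian $\varphi\in\lp$ so that the operator collapses to a nonvanishing multiple of $(cx+d)p-\beta p'$.  However, there is a slip in your parameter choice that prevents the argument from going through as written.

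With your $\varphi(x)=\exp\!\big(-\tfrac{c}{2\beta}x^2-\tfrac{d}{\beta}x\big)$ and $\alpha=\beta$ you correctly compute
\[
\varphi p'+\beta\varphi' p \;=\; \varphi\big(p'-(cx+d)p\big),
\]
but then assert $Z_C\big((cx+d)p-\beta p'\big)=Z_C\big(\varphi(p'-(cx+d)p)\big)$.  These are not the same polynomial up to a nonvanishing factor: negating $p'-(cx+d)p$ gives $(cx+d)p-p'$, i.e.\ the special case $\beta=1$, not $(cx+d)p-\beta p'$.  So as it stands you have only proved the corollary for $\beta=1$.

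The fix is immediate.  Keeping your $\varphi$ but taking $\alpha=1$ (which is still $\geq 0$) gives
\[
\varphi p'+\varphi' p \;=\; \varphi\Big(p'-\tfrac{1}{\beta}(cx+d)p\Big) \;=\; -\tfrac{1}{\beta}\,\varphi\big((cx+d)p-\beta p'\big),
\]
and now Theorem~\ref{LPcor} yields the desired bound.  Equivalently (and this is what the paper does), one may take $\alpha=\beta^{-1}$ together with $\varphi(x)=-\exp\!\big(-\tfrac{c}{2}x^2-dx\big)$; the computation then produces $\beta^{-1}\exp\!\big(-\tfrac{c}{2}x^2-dx\big)\big((cx+d)p-\beta p'\big)$ directly.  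Either choice repairs the argument, and the rest of your write-up (the $\beta=0$ case and the observation that a zero-free $\varphi$ does not affect $Z_C$) is fine.
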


\begin{proof}
If $\beta = 0$, the result clearly holds. If $\beta>0$ we may appeal to corollary \ref{LPcor} with $\alpha = \beta^{-1}$, $q(x) = 1$, and 
$$
\varphi(x) = -\exp\left(-\frac{c}{2}x^2- d\,x\right) \qquad (c\geq 0, d\in\mbb{R})
$$
to obtain the desired result.
\end{proof}

\subsection{CZDS for the Generalized Laguerre Polynomial Basis}

In this section, we combine the results of the previous section with the methods of Section \ref{JlambdaCZDS} to obtain a class of CZDS for the generalized Laguerre polynomial basis, defined by 
$$
L_n^{(\alpha)}(x) :=\sum_{k=0}^{n} \binom{n+\alpha}{n-k} \frac{(-x)^k}{k!} \qquad (\alpha>-1; n=0, 1, 2, \dots).
$$
The generalized Laguerre polynomials satisfy the differential equation (see, e.g., \cite[p. 204]{R})
\begin{equation}\label{Ldiffeq}
 - x \frac{d^2}{dx^2} L_n^{(\alpha)}(x) +(x-(\alpha+1)) \frac{d}{dx}L_n^{(\alpha)}(x) = n L_n^{(\alpha)}(x).
\end{equation}
Just as with the Jacobi basis, we will develop a number of operator identities in order to arrive at a collection of $L^{(\alpha)}$-CZDS. We begin by defining, for any $a\in\mbb{R}$, 
\be\label{Psi}
\Psi_a = -xD + (x-(a+1))I
\ee
\begin{lm} \label{LnD'd}
Suppose $c,\alpha\in \mbb{R}$.  Then, for all non-negative integers $n$, 
$$
D^n ( \Psi_{\alpha} D - n I) = \Psi_{\alpha+n} D^{n+1}.
$$
\end{lm}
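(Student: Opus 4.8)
The plan is to prove Lemma \ref{LnD'd} by a direct appeal to the operator-identity machinery already established in Proposition \ref{genD'd}, exactly as was done for the ultraspherical analogue in Lemma \ref{JnD'd}. The left-hand side $D^n(\Psi_\alpha D - nI)$ is an operator of the form $D^n(g_2(x)D^2 + g_1(x)D + g_0(x)I)$ after I expand $\Psi_\alpha D = (-xD + (x-(\alpha+1))I)D = -xD^2 + (x-(\alpha+1))D$. Reading off the coefficient polynomials, I have $g_2(x) = -x$, $g_1(x) = x-(\alpha+1)$, and $g_0(x) = -n$ (the constant coming from the $-nI$ term). Crucially, these satisfy the degree hypothesis $\deg(g_k)\le k$ required by Proposition \ref{genD'd}, since $\deg(g_2)=1\le 2$, $\deg(g_1)=1$, and $g_0$ is constant.

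First I would invoke the specialized form of Proposition \ref{genD'd} recorded in equations (\ref{gD1}) and (\ref{gD2}) with $m=2$. This gives
$$
D^n(\Psi_\alpha D - nI) = \left(g_2(x)D^2 + (ng_2'(x)+g_1(x))D + \left(\tbinom{n}{2}g_2''(x)+ng_1'(x)+g_0(x)\right)I\right)D^n.
$$
Then I would substitute the specific coefficients. The $D^2$ term keeps coefficient $-x$. The $D$ coefficient becomes $n(-1)+(x-(\alpha+1)) = x-(\alpha+1)-n = x-((\alpha+n)+1)$. The zeroth-order coefficient becomes $\binom{n}{2}\cdot 0 + n\cdot 1 + (-n) = 0$, since $g_2''=0$ and $g_1'=1$. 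Collecting these, the bracketed operator is $-xD^2 + (x-((\alpha+n)+1))D$, which factors as $\Psi_{\alpha+n}D$. Hence the whole expression equals $\Psi_{\alpha+n}D^{n+1}$, which is precisely the claimed identity.

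The only step requiring genuine attention is the verification that the constant coefficient collapses to zero, since this is what makes the $-nI$ term on the left exactly cancel against the $ng_1'(x)I$ contribution produced by Leibniz differentiation; the fact that $g_2$ is linear (so $g_2''=0$) and $g_1$ has derivative $1$ is what forces this cancellation and allows the right-hand side to drop the zeroth-order operator entirely, yielding a clean factor of $D^{n+1}$. I expect no serious obstacle: as in Lemma \ref{JnD'd}, the result is an immediate application of equations (\ref{gD1}) and (\ref{gD2}), and the proof amounts to checking the degree condition and carrying out the elementary coefficient bookkeeping. The parameter $c$ in the hypothesis plays no role in the statement and can simply be ignored.
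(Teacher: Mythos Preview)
Your proof is correct and follows exactly the same approach as the paper, which simply states that the lemma is an immediate application of equations (\ref{gD1}) and (\ref{gD2}). You have merely spelled out the coefficient bookkeeping in detail, and your observation that the hypothesis $c\in\mbb{R}$ is vestigial is accurate.
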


\begin{proof}
This is an immediate application of equations (\ref{gD1}) and (\ref{gD2}) which appear after the proof of Proposition \ref{genD'd} in Section \ref{opid}.
\end{proof}

\begin{prop}\label{Lmud}
Let $w$ be a positive integer and $\{m_k\}_{k=0}^{w-1}\subset\mbb{N}$. 
Then
$$
\prod_{k=0}^{w-1} (\Psi_{\alpha}D -kI)^{m_k} = \left[\prod_{k=0}^{w-1}  \left[(\Psi_{\alpha+k}D)^{m_k-1} \Psi_{\alpha+k}  \right]\right]D^{w},
$$
where $\Psi_a$ is defined in equation (\ref{Psi}).
\end{prop}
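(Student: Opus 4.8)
The statement is the Laguerre-basis analogue of Proposition \ref{mud}, and its proof should follow the identical inductive scheme, simply substituting Lemma \ref{LnD'd} for Lemma \ref{JnD'd}. The plan is to argue by induction on $w$. The base case $w=1$ asserts
$$
(\Psi_{\alpha}D - 0\cdot I)^{m_0} = (\Psi_{\alpha}D)^{m_0-1}\Psi_{\alpha}\,D,
$$
which is immediate since $(\Psi_\alpha D)^{m_0} = (\Psi_\alpha D)^{m_0-1}\Psi_\alpha D$; the index $k=0$ kills the $-kI$ term, and the lone trailing $D$ matches $D^{w}=D^1$.

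For the inductive step, I would assume the identity for some $w\geq 1$ and split off the top factor exactly as in the proof of Proposition \ref{mud}. Writing
$$
\prod_{k=0}^{w} (\Psi_{\alpha}D -kI)^{m_k} = \Theta\, D^{w}\,(\Psi_{\alpha}D - wI)^{m_w},
$$
where $\Theta = \prod_{k=0}^{w-1}\left[(\Psi_{\alpha+k}D)^{m_k-1}\Psi_{\alpha+k}\right]$ is supplied by the inductive hypothesis. The crux is then to push the block $D^{w}$ past the $m_w$ copies of $(\Psi_{\alpha}D - wI)$. Here I would invoke Lemma \ref{LnD'd}, which gives $D^{w}(\Psi_{\alpha}D - wI) = \Psi_{\alpha+w}D^{w+1}$; applying it $m_w$ times (each application consumes one factor and advances the $D$-power back down, mirroring equation (\ref{theta3})) yields
$$
D^{w}(\Psi_{\alpha}D - wI)^{m_w} = (\Psi_{\alpha+w}D)^{m_w}D^{w}.
$$
Recombining with $\Theta$ and absorbing the trailing $D^{w}\cdot D = D^{w+1}$ produces the claimed product over $k=0,\dots,w$.

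The one place demanding genuine care — and the step I expect to be the main obstacle — is verifying that the repeated application of Lemma \ref{LnD'd} telescopes correctly, i.e. that each of the $m_w$ applications legitimately uses the \emph{same} subscript increment. The subtlety is that after the first application the exponent on $D$ temporarily rises to $w+1$, so the second application must use $D^{w}$ acting on a freshly exposed $(\Psi_\alpha D - wI)$, not $D^{w+1}$; one must check that the intermediate $(\Psi_{\alpha+w}D)$ factor commutes past correctly so that the lemma applies with index $w$ at every stage. This is precisely the bookkeeping carried out in equation (\ref{theta3}) for the ultraspherical case, and since Lemma \ref{LnD'd} has exactly the same shape as Lemma \ref{JnD'd} (with the scalar $n(n+1+2\lambda)$ replaced by the simpler $n$), the argument transfers verbatim. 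I would therefore keep the proof short, stating that it proceeds by induction exactly as in Proposition \ref{mud}, with Lemma \ref{LnD'd} playing the role of Lemma \ref{JnD'd}.
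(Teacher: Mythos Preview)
Your proposal is correct and matches the paper's own proof essentially verbatim: induction on $w$, the base case $w=1$ being immediate, and the inductive step obtained by peeling off the top factor and applying Lemma~\ref{LnD'd} a total of $m_w$ times to get $D^{w}(\Psi_{\alpha}D-wI)^{m_w}=(\Psi_{\alpha+w}D)^{m_w}D^{w}=(\Psi_{\alpha+w}D)^{m_w-1}\Psi_{\alpha+w}D^{w+1}$. Your remark about the bookkeeping---factoring $\Psi_{\alpha+w}D^{w+1}$ as $(\Psi_{\alpha+w}D)D^{w}$ so that the same index $w$ is used at every application---is exactly the mechanism behind the paper's one-line equation~(\ref{Ltheta3}).
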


\begin{proof}
We will argue by mathematical induction. The case $w=1$ is clear. Now suppose that the result is true for some integer $w\geq 1$ and fix natural numbers $m_0, m_1, \dots, m_w$. Then 
\be\label{Ltheta1}
\prod_{k=0}^{w} (\Psi_{\alpha}D -kI)^{m_k} = \prod_{k=0}^{w-1}  \left[\big(\Psi_{\alpha+k}D \big)^{m_k-1} \Psi_{\alpha+k} \right] D^{w} ( \Psi_{\alpha}D -w I)^{m_w}.
\ee
Applying lemma \ref{LnD'd} a total of $m_w$ times we see that 
\be\label{Ltheta3}
D^{w} ( \Psi_{\alpha}D -wI)^{m_w}  = (\Psi_{\alpha+w}D)^{m_w}D^{w} =  (\Psi_{\alpha+w}D)^{m_w-1} \Psi_{\alpha+w} D^{w+1}.
\ee
Together, equations (\ref{Ltheta1}) and (\ref{Ltheta3}) give the desired result.
\end{proof}

In order to use the operator identities above to find a collection of $L^{(\alpha)}$-CZDS for any $\alpha>-1$, we will use the result of Section \ref{LP}.

\begin{lm}\label{Llem}
For any $a>-1$, the operator $ \Psi_a = - x D+(x-(a+1))I$ is a CZDO.
\end{lm}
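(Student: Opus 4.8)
The plan is to show that $\Psi_a = -xD + (x-(a+1))I$ is a CZDO by exhibiting it as an instance of the extension to the Laguerre-P\'olya class proved in Theorem \ref{LPcor}, in much the same spirit as the proof of Corollary \ref{lmgenBC}. The key observation is that $\Psi_a$ is a first-order operator of exactly the shape $(cx+d)I - \beta D$ appearing in Corollary \ref{lmgenBC}: writing $\Psi_a(p) = (x - (a+1))p - x p'$, I would match this to the form $(cx+d)p(x) - \beta p'(x)$. However, here the coefficient of $p'$ is $x$ rather than a constant $\beta$, so Corollary \ref{lmgenBC} does not apply directly; instead I would go back to Theorem \ref{LPcor} itself, which permits a polynomial $q$ multiplying the derivative term.

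Concretely, I would invoke Theorem \ref{LPcor} with the choices $q(x) = x$, $\alpha = 1$, and a suitable $\varphi \in \lp$. With these choices the operator in Theorem \ref{LPcor} becomes $\varphi q\, p' + (\varphi q)' p = \varphi x\, p' + (\varphi x)' p$. To recover $\Psi_a(p) = -x p' + (x-(a+1)) p$, I would want to select $\varphi$ so that, after dividing through by $\varphi$ (which does not change the count of non-real zeros since $\varphi$ has only real zeros), the resulting operator is a nonzero scalar multiple of $\Psi_a$. First I would compute $(\varphi x)' = \varphi' x + \varphi$ and then arrange the exponential factor of $\varphi$ to produce the needed ratio between the $p'$ and $p$ coefficients. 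Choosing $\varphi(x) = -e^{-x} x^{a}$-type behavior is the natural candidate suggested by the generalized Laguerre weight $x^{\alpha} e^{-x}$; since $a > -1$, the factor $x^{a}$ is not entire for non-integer $a$, so the delicate point is to package the weight correctly as a genuine $\lp$ function (an exponential times a power of $x$) while still matching coefficients.

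The main obstacle I anticipate is precisely this matching of the exponential and power factors so that $\varphi \in \lp$ and the coefficients come out proportional to $-x$ and $x-(a+1)$. I expect the cleanest route is to absorb the troublesome $x^{a}$ into the polynomial $q$ rather than into $\varphi$: that is, take $q(x) = x^{r}$ for an appropriate nonnegative integer and $\varphi(x) = \pm e^{-x}$, then check that $\varphi q\, p' + \alpha(\varphi q)' p$, upon cancelling the common factor $e^{-x}$ and a power of $x$, reduces to a scalar multiple of $\Psi_a(p)$. Since $\varphi(x) = -e^{-x}$ belongs to $\lp$ (it is $ce^{bx}$ with $c=-1$, $b=-1$, $m=0$, $a=0$, and no product factors), Theorem \ref{LPcor} applies with $Z_C(q) = 0$, yielding $Z_C(\Psi_a(p)) \le Z_C(p)$. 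Verifying the exact value of $a$ produced by this choice, and confirming the proportionality constant is nonzero, is the routine computation I would carry out to finish; the conceptual content is entirely in recognizing $\Psi_a$ as a Laguerre-P\'olya-weighted first-order operator and reducing to Theorem \ref{LPcor}.
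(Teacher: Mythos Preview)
Your overall strategy is right --- this is exactly an application of Theorem~\ref{LPcor} --- but the specific choices you commit to do not cover general $a>-1$, and the final ``routine computation'' would reveal this. With $\alpha=1$ fixed, the operator in Theorem~\ref{LPcor} is simply $p\mapsto(\varphi q\,p)'$, so matching $\varphi q\,p'+(\varphi q)'p$ to a multiple of $\Psi_a(p)=-xp'+(x-(a+1))p$ forces $(\varphi q)'/(\varphi q)=-1+(a+1)/x$, i.e.\ $\varphi(x)q(x)=Cx^{a+1}e^{-x}$. For $q$ to be a polynomial and $\varphi\in\lp$ you then need $a+1$ to be a nonnegative integer. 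In particular your proposed choice $q(x)=x^r$, $\varphi(x)=\pm e^{-x}$, $\alpha=1$ yields $e^{-x}x^{r-1}\big(-xp'+(x-r)p\big)$, which is $\Psi_{r-1}$, not $\Psi_a$ for arbitrary $a>-1$.

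The missing idea is to use the free parameter $\alpha$ in Theorem~\ref{LPcor} rather than the exponent of $q$. The paper sets $c=a+1>0$ and takes $q(x)=-x$, $\varphi(x)=e^{-x/c}\in\lp$, and $\alpha=c$. Then
\[
\varphi q\,p'+\alpha(\varphi q)'p
=-xe^{-x/c}p'+c\big(-e^{-x/c}+\tfrac{x}{c}e^{-x/c}\big)p
=e^{-x/c}\big(-xp'+(x-c)p\big)
=e^{-x/c}\,\Psi_a(p),
\]
and since $e^{-x/c}$ never vanishes and $Z_C(q)=0$, Theorem~\ref{LPcor} gives $Z_C(\Psi_a(p))\le Z_C(p)$ for every $a>-1$. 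So your instinct to package the Laguerre weight into $\varphi q$ was correct; the fix is to put the non-integer parameter into $\alpha$ (and into the decay rate of the exponential), not into a power of $x$.
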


\begin{proof}
Suppose $a>-1$ and set $c=a+1$. By Theorem \ref{LPcor}, for any real polynomial $p$, 
$$
Z_C\left( c \frac{d}{dx}\left( -x\exp(-x/c) \right)p(x) + (-x \exp(-x/c))p(x)   \right) \leq Z_C(p(x)).
$$
The smaller quantity above simplifies to
$$
Z_C\left( (- x p'(x)+(x-c)p(x) )\exp(-x/c)  \right).
$$
Since the exponential function never vanishes, we have shown that
$$
Z_C(\Psi_a p(x)) = Z_C( - x p'(x)+(x-c)p(x))\leq Z_C(p(x)).
$$
\end{proof}

We now arrive at the main theorem of this section.

\begin{thm}\label{LCZDS} Fix $\alpha>-1$. If $w$ is a positive integer and $\{m_k\}_{k=0}^{w-1}\subset\mbb{N}$, then the sequence
\be\label{Lfallingfactorial}
\left\{ \prod_{k=0}^{w-1}(n-k)^{m_k} \right\}_{n=0}^{\infty} 
\ee
is an $L^{(\alpha)}$-CZDS.
\end{thm}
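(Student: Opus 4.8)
The plan is to run the exact same argument used to prove Theorem \ref{JCZDS} for the ultraspherical basis, with the generalized-Laguerre machinery ($\Psi_a$, Lemma \ref{LnD'd}, Proposition \ref{Lmud}, and Lemma \ref{Llem}) playing the role of the ultraspherical machinery ($\Phi_a$, etc.). First I would introduce the linear operator $L:\mbb{R}[x]\to\mbb{R}[x]$ defined on the basis by
$$
L\bigl(L_n^{(\alpha)}(x)\bigr) = \Bigl(\prod_{k=0}^{w-1}(n-k)^{m_k}\Bigr) L_n^{(\alpha)}(x) \qquad (n=0,1,2,\dots)
$$
and extended by linearity. Since the assertion that the given sequence is an $L^{(\alpha)}$-CZDS is precisely the statement that $L$ is a CZDO, the goal is to prove the latter.

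Next I would recast the differential equation (\ref{Ldiffeq}) in operator form. Because $\Psi_\alpha D = -xD^2 + (x-(\alpha+1))D$, equation (\ref{Ldiffeq}) says exactly that $\Psi_\alpha D\, L_n^{(\alpha)} = n\, L_n^{(\alpha)}$; that is, each $L_n^{(\alpha)}$ is an eigenvector of $\Psi_\alpha D$ with eigenvalue $n$. Consequently every polynomial in the single operator $\Psi_\alpha D$ is diagonalized by the Laguerre basis, and in particular $\prod_{k=0}^{w-1}(\Psi_\alpha D - kI)^{m_k}$ acts on $L_n^{(\alpha)}$ by the scalar $\prod_{k=0}^{w-1}(n-k)^{m_k}$. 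Reading off eigenvalues therefore identifies
$$
L = \prod_{k=0}^{w-1}(\Psi_\alpha D - kI)^{m_k},
$$
and applying Proposition \ref{Lmud} factors this as
$$
L = \Bigl[\prod_{k=0}^{w-1}\bigl[(\Psi_{\alpha+k}D)^{m_k-1}\Psi_{\alpha+k}\bigr]\Bigr]D^{w}.
$$

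Finally I would argue that $L$, being a composition of CZDOs, is itself a CZDO. The factor $D^{w}$ is a composition of copies of $D$, which is a CZDO by Remark \ref{D}, and each factor $\Psi_{\alpha+k}$ is a CZDO by Lemma \ref{Llem}. This last step is the only place the hypothesis $\alpha>-1$ is genuinely used: it guarantees $\alpha+k>-1$ for every $k\geq 0$, so Lemma \ref{Llem} applies uniformly across all factors appearing in the product. I do not expect a substantive obstacle here, since the theorem is an assembly of results already established; the only points meriting care are verifying that the eigenvalue reading-off is legitimate (which follows because all constituent operators are polynomials in $\Psi_\alpha D$ and hence share the eigenbasis $\{L_n^{(\alpha)}\}$) and confirming that the index shift in Proposition \ref{Lmud} keeps every $\Psi_{\alpha+k}$ within the range $a>-1$ required by Lemma \ref{Llem}.
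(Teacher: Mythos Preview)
Your proposal is correct and follows essentially the same approach as the paper's own proof: define the diagonal operator, identify it with $\prod_{k=0}^{w-1}(\Psi_\alpha D - kI)^{m_k}$ via the differential equation, apply Proposition~\ref{Lmud} to factor it, and conclude by Lemma~\ref{Llem} and Remark~\ref{D} that every factor is a CZDO. Your write-up is in fact a bit more explicit than the paper's (you spell out why $\Psi_\alpha D$ has the Laguerre polynomials as eigenvectors and why $\alpha+k>-1$), but the argument is the same; the only cosmetic issue is that naming the operator $L$ clashes with the notation $L_n^{(\alpha)}$ for the basis polynomials.
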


\begin{proof}
Let the linear operator $\Theta:\mbb{R}[x]\to\mbb{R}[x]$ be defined by 
$$
\Theta(L_n^{(\alpha)}(x)) := \left( \prod_{k=0}^{w-1}(n-k)^{m_k}  \right)L_n^{(\alpha)}(x).
$$
Combining the differential equation (\ref{Ldiffeq}), the notation defined in equation (\ref{Psi}), and Proposition \ref{Lmud}, we have
$$
\Theta = \prod_{k=0}^{w-1} (\Psi_{\alpha}D - k I )^{m_k} = \left[\prod_{k=0}^{w-1}  \left[(\Psi_{\alpha+k}D)^{m_k-1} \Psi_{\alpha+b}\right]\right]D^{w}.
$$
The operator $\Theta$ is, therefore, a composition of individual operators, each of which are CZDO. This can be seen by appealing to Lemma \ref{Llem}.
\end{proof}

Theorem \ref{LCZDS} is a significant generalization and extension of a theorem due to Forg\'acs and Piotrowski \cite[Theorem 4.4]{FP} and a stronger result on a narrower class of sequences than those characterized by Br\"and\'en and Ottergren in \cite{BO}.

\section{Open Questions}
Any sequence of the form 
$$
\{k(k-1)\cdots (k-(m-1))\}_{k=0}^{\infty}
$$
(the ``falling-factorial sequence'') is a CZDS for the standard basis. By Corollary \ref{TCZDS}, any sequence of the form 
$$
\{k^2(k^2-1)\cdots(k^2-(m-1)^2)\}_{k=0}^{\infty}
$$
is a $T$-CZDS. The similarity of these results lead us to wonder if an analog of Theorem \ref{polyintCZDS} could be obtained for the Chebyshev basis. 
\begin{prob} \label{P1}
Find a complete characterization of polynomials $h$ for which $\bs h(k)\es$ is a $T$-CZDS, where $T$ denotes the Chebyshev basis.
\end{prob}
We note that the characterization will be different from that of the standard basis, since the sequence $\{k\}_{k=0}^{\infty}$ is not a $T$-CZDS. 

The results on ultraspherical and Laguerre CZDS also have a falling factorial nature which leads us to consider the more general problem.
\begin{prob} \label{P2}
For any basis $B$, find a complete characterization of polynomials $h$ for which $\bs h(k)\es$ is a $B$-CZDS.
\end{prob}
Recall that this problem has been solved when the basis is taken to be either the standard basis or the Hermite basis. The result \cite[Lemma 157]{P} solves the problem for any affine transformation of the standard basis or the Hermite basis. To date, Problem \ref{P2} remains unsolved for any other choice of the basis $B$.

As it was mentioned earlier, no complete characterization of CZDS for the standard basis is known. In particular, it is not known whether or not every rapidly decreasing sequence (such as $\{\exp(-k^3)\}_{k=0}^{\infty}$) is a CZDS for the standard basis (see \cite[Problem 4.8]{CCsurvey} for more details). A theorem of Piotrowski gives a connection between these and CZDS for other bases.
\begin{thm}\label{thmqczdsczds}\cite[Theorem 159]{P}
Let $B = \{q_k(x)\}_{k=0}^{\infty}$ be a simple set of polynomials.  If the sequence $\{\gamma_k\}_{k=0}^{\infty}$ is a $B$-CZDS, then the sequence $\{\gamma_k\}_{k=0}^{\infty}$ is a CZDS for the standard basis.
\end{thm}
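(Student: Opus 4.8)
The plan is to realize the standard basis $\{x^k\}$ as a scaling limit of the simple set $B$ and to transfer the complex-zero-decreasing property across this limit by means of Hurwitz' theorem. Throughout, let $T$ denote the standard-basis diagonal operator $T(x^k)=\gamma_k x^k$; the goal is to show $Z_C(T(p))\leq Z_C(p)$ for every real polynomial $p$.

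First I would pass to a one-parameter family of rescaled bases. Writing $q_k(x)=c_{k,k}x^k+(\text{lower order})$ with $c_{k,k}\neq 0$, set
$$
q_k^{[\lambda]}(x):=\frac{q_k(\lambda x)}{c_{k,k}\lambda^k}\qquad (\lambda>0;\ k=0,1,2,\dots).
$$
Each $\{q_k^{[\lambda]}\}_{k=0}^\infty$ is again a (monic) simple set, and the characterization \cite[Lemma 156]{P}, applied with $\alpha=\lambda$, $\beta=0$, and $c_n=(c_{n,n}\lambda^n)^{-1}$, shows that $\seq$ is a $\{q_k^{[\lambda]}\}$-CZDS for every $\lambda>0$. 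The purpose of this normalization is that $q_k^{[\lambda]}(x)\to x^k$ coefficientwise as $\lambda\to\infty$, since each lower-order coefficient of $q_k^{[\lambda]}$ carries a factor $\lambda^{j-k}$ with $j<k$.

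Next I would fix a real polynomial $p=\sum_{k=0}^n a_k x^k$ and compare $T$ with the $\{q_k^{[\lambda]}\}$-diagonal operator $L^{[\lambda]}$. Expanding $p=\sum_{k=0}^n b_k^{[\lambda]} q_k^{[\lambda]}$, the transition matrix from $\{x^k\}$ to $\{q_k^{[\lambda]}\}$ is unipotent and upper-triangular with off-diagonal entries tending to $0$, so its inverse tends to the identity and hence $b_k^{[\lambda]}\to a_k$. Consequently
$$
L^{[\lambda]}(p)=\sum_{k=0}^n \gamma_k b_k^{[\lambda]} q_k^{[\lambda]} \longrightarrow \sum_{k=0}^n \gamma_k a_k x^k = T(p)
$$
coefficientwise, and therefore uniformly on compact subsets of $\mbb{C}$. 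If $T(p)\equiv 0$ the conclusion is immediate; otherwise I would invoke Hurwitz' theorem: around each non-real zero $z_0$ of $T(p)$ choose a small disk lying in one half-plane and isolating $z_0$, so that for all large $\lambda$ the function $L^{[\lambda]}(p)$ has exactly as many (necessarily non-real) zeros in that disk as the multiplicity of $z_0$. Summing over the finitely many non-real zeros gives $Z_C(T(p))\leq Z_C(L^{[\lambda]}(p))$ for large $\lambda$, and combining this with $Z_C(L^{[\lambda]}(p))\leq Z_C(p)$ (the CZDS property established above) yields $Z_C(T(p))\leq Z_C(p)$.

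I expect the delicate point to be this last transfer step rather than the algebra. One must arrange the Hurwitz inequality so that it bounds $Z_C(T(p))$ \emph{from above} by $Z_C(L^{[\lambda]}(p))$, which is exactly why the non-real zeros of the limit must be captured by non-real zeros of the approximants. A secondary subtlety is that $L^{[\lambda]}(p)$ and $T(p)$ may differ in degree when some $\gamma_k$ vanish; this causes no difficulty, however, because the argument only tracks zeros in fixed neighborhoods of the finitely many non-real zeros of $T(p)$ and never requires the approximants to share the degree of their limit.
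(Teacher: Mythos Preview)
The paper does not supply its own proof of Theorem~\ref{thmqczdsczds}; the result is only quoted from \cite[Theorem 159]{P}. Consequently there is no in-paper argument against which to compare your proposal, and the relevant question is simply whether your proof is sound.

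Your argument is correct. The rescaling $q_k^{[\lambda]}(x)=q_k(\lambda x)/(c_{k,k}\lambda^k)$ is exactly the kind of transformation covered by \cite[Lemma 156]{P} (as cited in the paper), so the CZDS property transfers to every $\lambda>0$. The observation that $q_k^{[\lambda]}\to x^k$ coefficientwise, together with the fact that the change-of-basis matrix is unipotent upper-triangular with off-diagonal entries of order $\lambda^{-1}$, gives $b_k^{[\lambda]}\to a_k$ and hence $L^{[\lambda]}(p)\to T(p)$ uniformly on compacta (all polynomials involved have degree at most $n$). The Hurwitz step is handled correctly: since $T(p)$ has only finitely many non-real zeros, you may choose pairwise disjoint disks about them, each avoiding $\mbb{R}$, and a single large $\lambda$ works simultaneously for all of them, yielding $Z_C(T(p))\leq Z_C(L^{[\lambda]}(p))\leq Z_C(p)$. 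Your remark about possible degree mismatch is apt but, as you note, irrelevant to Hurwitz' theorem, which only requires $T(p)\not\equiv 0$ and uniform convergence on compacta.

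One cosmetic point: you could equivalently rescale the input polynomial rather than the basis (i.e., apply the $B$-diagonal operator to $\lambda^{-n}p(\lambda x)$ and then undo the scaling), which avoids invoking \cite[Lemma 156]{P} explicitly; but the two formulations are interchangeable and your version is perfectly clean.
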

This prompts us to state a weaker version of Problem 4.8(a) of \cite{CCsurvey} which may be easier to settle.
\begin{prob}\label{P3}
Is there a simple set $B$ for which $\{\exp(-k^3)\}_{k=0}^{\infty}$ is a $B$-CZDS?
\end{prob}
We mention that our methods of simultaneously generating a basis and CZDS may apply. However, the original operator will have to be modified as all of our methods generated sequences which can be interpolated by polynomials. 

Our last open problem has to do with the existence of solutions of differential equations. We believe the following problem can be answered in the affirmative and this result would give a more complete picture of the main result in section \ref{generate}.
\begin{prob}\label{P4}
Let $\alpha\geq 0$ and let $q$ be a real polynomial of degree $r$ with leading coefficient $c_r$. 
Is there a simple set of polynomials $B = \{b_k(x)\}_{k=0}^{\infty}$ that satisfy the differential equation
\begin{equation}\label{P4bde}
 c_r \frac{n!(n+(\alpha-1)r+1)}{(n-r+1)!} b_n(x) =  q(x) b_n^{(r)}(x)+ \alpha q'(x) b_n^{(r-1)} (x)
\end{equation}
for $n=0, 1, 2, \dots$?
\end{prob}

\section{Acknowledgment}
The authors would like to thank the MAA, NSA, and NSF for their financial support of this project and the mathematics program faculty and staff at UAS for their moral and administrative support. The fifth author would also like to recognize Dr. George Csordas and Dr. Tam\'as Forg\'acs for their inspiration, encouragement, and helpful suggestions.


\begin{thebibliography}{99}

\bibitem[BDFU]{BDFU} {K. Blakeman, E. Davis, T. Forg\'acs, and K. Urabe} {\it On Legendre Multiplier Sequences}, Missouri J. Math. Sci. Volume 24, Issue 1 (2012), 7-23.

\bibitem[BC]{BC} {D. Bleecker and G. Csordas} {\it Hermite expansions and the distribution of zeros of entire functions}, Acta Sci. Math. (Szeged), {\bf 67} (2001), 177-196.

\bibitem[BO]{BO} {P. Br\"and\'en \and E. Ottergren}, {\it A Characterization of Multiplier Sequences for Generalized Laguerre Bases}, arXiv preprint.

\bibitem[CCc]{CCczds} {T. Craven \and G. Csordas}, {\it Complex zero decreasing sequences}, Methods Appl. Anal. {\bf 2} (1995), 420-441.

\bibitem[CCs]{CCsurvey} {T. Craven \and G. Csordas}, {\it Composition theorems, multiplier sequences and complex zero decreasing sequences}, in Value Distribution Theory and Related Topics, Advances in Complex Analysis and Its Applications, Vol. 3, eds. G. Barsegian, I. Laine and C. C. Yang, Kluwer Press, 2004. 

\bibitem[FP]{FP} {T. Forg\'acs \and A. Piotrowski}, {\it Multiplier Sequences for Generalized Laguerre Bases}, Rocky Mountain J. Math. Volume 43, Number 4 (2013), 1141-1159.

\bibitem[L]{Levin} B. Ja. Levin, {\emph{Distribution of Zeros of Entire Functions}}, Transl. Math. Mono. Vol. 5, Amer. Math. Soc., Providence, RI, 1964; revised ed. 1980.

\bibitem[M]{Marden} M. Marden, \textit{The Geometry of the Zeros of a Polynomial in a Complex Variable}, American Mathematical Society, New York, 1949.

\bibitem[O]{O} N. Obreschkoff, {\emph{Verteilung und Berechnung der Nullstellen Reeller Polynome}}, Veb Deutscher Verlag der Wissenschaften, Berlin, 1963.

\bibitem[Pi]{P} A. Piotrowski, {\it Linear Operators and the Distribution of Zeros of Entire Functions}, Ph.D. Dissertation, University of Hawaii at Manoa.  ProQuest LLC, Ann Arbor, MI, 2007.

\bibitem[Po]{PLaguerre} G. P\'olya, \textit{\"Uber einen Satz von Laguerre}, Lger. Deutsch. Math-Verein. \textbf{38} (1929), 161-168.

\bibitem[PS]{PS} { G. P\'olya \and J. Schur}, `\"Uber zwei Arten von Faktorenfolgen in der Theorie der algebraischen Gleichungen', {\em J. Reine Angew. Math.} 144 (1914) 89-113.

\bibitem[R]{R} E. Rainville, {\emph{Special Functions}}, Chelsea, New York, 1960.

\bibitem[W]{W} H. S. Wilf, \textit{Mathematics for the Physical Sciences}, Wiley, New York, 1962.\textsc{}

\end{thebibliography}
\end{document}